\DeclareMathOperator{\conv}{conv}
\DeclareMathOperator{\down}{down}
\DeclareMathOperator{\homog}{homog}
\DeclareMathOperator{\rec}{rec}
\DeclareMathOperator{\rc}{rc}
\DeclareMathOperator{\rrc}{rrc}
\DeclareMathOperator{\supp}{supp}
\DeclareMathOperator{\xc}{xc}
\DeclareMathOperator{\xcs}{xc*}
\newcommand{\cdd}{\texttt{cdd}}
\newcommand{\Hleq}{H^{\leq}}
\newcommand{\Heq}{H^=}
\newcommand{\numfacets}{\# \mathrm{facets}}
\newcommand{\numverts}{\# \mathrm{vertices}}
\newcommand{\polymake}{\texttt{polymake}}
\newcommand{\R}{\mathbb{R}}
\newcommand{\unitvec}{\mathbbm{e}}
\newcommand{\zerovec}{\mathbb{O}}
\newcommand{\flipmap}{\varphi_{\mathrm{flip},i}}
\newcommand{\swapmap}{\varphi_{\mathrm{swap},i,j}}
\newtheorem{theorem}{Theorem}[section]
\newtheorem{lemma}[theorem]{Lemma}
\newtheorem{proposition}[theorem]{Proposition}
\newtheorem{corollary}[theorem]{Corollary}
\newtheorem*{observation*}{Observation}
\theoremstyle{definition}
\newtheorem{definition}{Definition}
\theoremstyle{remark}
\newtheorem{remark}{Remark}
\title{Computing The Extension Complexities of All 4-Dimensional 0/1-Polytopes}
\author{Michael Oelze$^1$}
\address{$^1$Otto-von-Guericke-Universit\"at Magdeburg (Germany), \normalfont{\texttt{michael5oelze@gmail.com}}}
\author{Arnaud Vandaele$^2$}
\address{$^2$Universit\'e de Mons (Belgium), \normalfont{\texttt{arnaud.vandaele@umons.ac.be}}}
\author{Stefan Weltge$^3$}
\address{$^3$Otto-von-Guericke-Universit\"at Magdeburg (Germany), \normalfont{\texttt{weltge@ovgu.de}}}
\begin{document}

\begin{abstract}
    We present slight refinements of known general lower and upper bounds on sizes of extended formulations for
    polytopes.
    With these observations we are able to compute the extension complexities of all 0/1-polytopes up to dimension 4.
    We provide a complete list of our results including geometric constructions of minimum size extensions for all
    considered polytopes.
    Furthermore, we show that all of these extensions have strong properties.
    In particular, one of our computational results is that every 0/1-polytope up to dimension 4 has a minimum
    size extension that is also a 0/1-polytope.
\end{abstract}

\maketitle

\section{Introduction}

\noindent
The theory of extended formulations is a fast-developing research field that adresses the problem of writing a polytope
as the projection of a preferably simpler polyhedron.
More precisely, given a polytope $ P \in \R^p $, a polyhedron $ Q \in \R^q $ together with a linear map $ \pi \colon
\R^q \to \R^p $ is an \emph{extension} of $ P $ if $ \pi(Q) = P $.
An explicit outer description of $ Q $ by linear inequalities and equations is called an \emph{extended
formulation} for $ P $.
The \emph{size} of an extension ($ Q $, $ \pi $) is defined as the number of the facets of $ Q $.
The quantity of major interest in the field of extended formulations is the so-called \emph{extension complexity} of a
polytope $ P $, which is defined as the smallest size of any extension of $ P $ and is denoted by $ \xc(P) $.
Equivalently, $ \xc(P) $ is the smallest number of inequalities in any extended formulation for $ P $.

The concept of extended formulations is motivated by the following fact: Suppose that $ (Q,\pi) $ is an extension of  $
P $.
Then optimizing a linear function $ x \mapsto \langle c,x \rangle $ over $ P $ is equivalent to maximizing $ y \mapsto
\langle \pi^*(c),y \rangle $ over $ Q $, where $ \pi^* $ is the adjoint map of $ \pi $.
Of course, if $ Q $ admits a simpler outer description than $ P $, then this might have a substantial impact on the
performance of algorithms solving such optimization problems.
Since $ 0/1 $-polytopes (i.e., polytopes with vertices in $ \{0,1\}^p $) play a central role in the application of
linear programming, they are also of particiular interest in the field of extended formulations.

Current research is mainly driven by the seminal results of Fiorini et al.~\cite{FioriniMPTW12} and
Rothvo\ss{} \cite{Rothvoss13} that give exponential lower bounds on the extension complexities of the TSP polytope and
the matching polytope, respectively.
While they answered two of the most important questions in the area of extended formulations, there are still many
(sometimes even elementary) open questions.
By giving a complete list of the extension complexities of all $ 0/1 $-polytopes up to dimension $4$, one aim of this
paper is to provide a first reference that hopefully allows progress on those questions.

For instance, it is not known whether for any rational polytope $ P $ there exists a rational extension of size $
\xc(P) $, not even if $ P $ is a $ 0/1 $-polytope.
As a consequence of observations mainly made in Section~\ref{sec:upperbounds}, our computational results show that
all $ 0/1 $-polytopes up to dimension $ 4 $ admit minimum size extensions that are even $ 0/1 $-polytopes.
Surprisingly, in all those extensions $ Q $ there is a 1-to-1 correspondence between the vertices of $ Q $ and their
images.

Another motivation for this work was the fact that, in general, computing $ \xc(P) $ for a given polytope $ P $ seems to
be a non-trivial task.
Note that, from its original definition, it is not obvious how to compute the extension complexity of a polytope.
However, due to Yannakakis' theorem \cite{Yannakakis91}, we know that computing $ \xc(P) $ is equivalent to computing
the nonnegative rank of one of its slack matrices (see Section \ref{sec:lowerbounds} for the precise definitions and the
statement).
For reasonable numbers of facets and vertices, it is possible to compute slack matrices using the double description
method, which is, for instance, efficiently implemented in \cdd~\cite{Fukuda14} and used by
\polymake~\cite{GawrillowJ00}.
Further, computing the exact nonnegative rank of a matrix can be reduced to the decision problem whether certain
semi-algebraic sets are nonempty.
Arora et al.~\cite{AroraGKM12} give a subtle construction of such sets whose descriptions are much smaller than
in naive approaches.
Thus, in principle, it is possible to compute the extension complexity of a polytope by finally using quantifier
elimination algorithms as in~\cite{BasuPR96}.
However, such general approaches still do not allow computations for polytopes of reasonable complexity.
In contrast, our calculations are based on very specific observations yielding matching lower and upper bounds on $
\xc(P) $.
Hence, they can be performed within a total time of few minutes using simple scripts as well as \polymake{} for
computing slack matrices.

Regarding our specific task, note that the extension complexity is obviously invariant under affine transformations.
Thus, when talking about $ 0/1 $-polytopes of dimension $ k $, we implicitly refer to full-dimensional polytopes with
vertices in $ \{0,1\}^k $.
(It is a basic fact that any $ 0/1 $-polytope of dimension $ k $ is affinely isomorphic to a full-dimensional $ 0/1
$-polytope in ambient dimension $ k $.)
Formally, there are $ 2^{2^4} = 65536 $ polytopes with vertices in $ \{0,1\}^4 $ -- most of them being
$ 4 $-dimensional.
Of course, it suffices to consider only
representatives of each affine equivalence class.
It turns out that there are still $ 202 $ distinct affine equivalence classes of $ 0/1 $-polytopes of dimension $4$.
While we also give results for (the few) $ 0/1 $-polytopes up to dimension three, our work mainly focusses on the more
challenging class of $ 4 $-dimensional $ 0/1 $-polytopes.

Our paper is organized as follows.
In Section~\ref{sec:upperbounds}, we describe known, simple geometric constructions for extended formulations and show
that they preserve interesting properties.
In order to obtain tight bounds on $ \xc(P) $, we carefully analyze the sizes of the resulting extensions.
In Section~\ref{sec:lowerbounds}, we recall known general lower bounds on the extension complexity of a polytope and
present a refinement of the rectangle covering bound, which yields improved bounds but is still computable by rather
simple combinatorial algorithms.
Finally, in Section~\ref{sec:computations}, we describe our approach of computing all extensions complexities and
present computational results.

\subsection{Notation}
The standard euclidean scalar product and norm are denoted by $ \langle \cdot,\cdot \rangle $ and $ \| \cdot \| $,
respectively.
For a nonnegative integer $ k $, we set $ [k] := \{1,\dotsc,k\} $.
The $ k $-dimensional nonnegative orthant is denoted by $ \R^k_+ $.
We use $ \Delta_k := \{ x \in \R^k_+ : \sum_{i=1}^k x_i = 1 \} $ to denote the standard $ (k-1) $-simplex.
The zero vector will be denoted by $ \zerovec $, whose dimension will always be clear from the context.
For a polyhedron $ P $, we use $ \rec(P) $ to denote its recession cone.
Further, $ \numfacets(P) $ and $ \numverts(P) $ denote the number of facets of $ P $ and number of vertices of $ P $,
respectively.

\section{Geometric Upper Bounds}
\label{sec:upperbounds}
\noindent
In this section, we review known bounds on the extension complexity that are based on simple geometric constructions.
We show that some bounds can be sligthly strengthened in relevant cases, which will be essential for the computations in
Section~\ref{sec:computations}.
Besides, we show that in our cases, the corresponding constructions preserve the following, strong properties:
\begin{definition}
    \label{def:nice01}
    Let $ P $ be a $ 0/1 $-polytope and $ (Q,\pi) $ be an extension for $ P $.
    Then $ (Q,\pi) $ is called a \emph{nice $ 0/1 $-extension} if
    \begin{compactenum}[(i)]
        \item \label{enum:zeroone} $ Q $ is a $ 0/1 $-polytope,
        \item \label{enum:vertexpreserving} each vertex of $ Q $ is projected onto a vertex of $ P $ and
        \item \label{enum:vertexbijective} for each vertex $ v $ of $ P $ there is exactly one vertex of $ Q $ that
        projects onto $ v $.
    \end{compactenum}
    The smallest size of any nice $ 0/1 $-extension of $ P $ is denoted by $ \xcs(P) $.
\end{definition}
\noindent
Clearly, we have that $ \xc(P) \leq \xcs(P) $ holds for any $ 0/1 $-polytope $ P $.
In general, the requirements of the above definition seem to be very restrictive.
In fact, in~\cite{PashkovichW14} it was shown that there exist polytopes for which no minimum size extension satisfies
property~(\ref{enum:vertexpreserving}).
However, the polytopes constructed in that paper were not $ 0/1 $-polytopes.
In particular, we will see that all minimum size constructions that are implicitly generated by our computations are
indeed nice $ 0/1 $-extensions.
\begin{remark}
    Our convention of restricting projections to be linear instead of affine maps is just a technical requirement:
    Indeed, if $ P = \alpha(Q) $ for an affine map $ \alpha $, define $ Q' := \{ (x,y) : x = \alpha(y), \, y \in Q \} $,
    which is affinely isomorphic to $ Q $, let $ \pi $ be the linear projection onto the $ x $-coordinates and obtain
    that $ (Q',\pi) $ is an extension for $ P $.
    Moreover, if $ P $ is a $ 0/1 $-polytope and $ (Q,\alpha) $ satisfies the above-named properties
    (\ref{enum:zeroone})--(\ref{enum:vertexbijective}), then so does $ (Q',\pi) $.
    In particular, we still have that $ \xc(P) = \xc(P') $ and $ \xcs(P) = \xcs(P') $ holds for affinely isomorphic
    polytopes $ P, P' $.
\end{remark}
\noindent
Let us start with two trivial known upper bounds on the extension complexity of a polytope $ P \subseteq \R^p $.
First, choosing $ Q = P $ and $ \pi \colon \R^p \to \R^p $ as the identity, any polytope is an extension of itself.
Second, if $ P = \conv (\{ v^1, \dotsc, v^k \}) $ for some points $ v^1, \dotsc, v^k \in \R^p $, then we obviously have
that
\[
    P = \Big\{ \sum\nolimits_{i=1}^k \lambda_i v^i :
    \lambda_j \geq 0 \ \forall \, j \in [k], \,
    \sum\nolimits_{j=1}^k \lambda_j = 1 \Big\}.
\]
Thus, setting $ Q = \Delta_k $ and $ \pi(\lambda) = \sum_{i=1}^k \lambda_i v^i $, we obtain that $ P $ is a linear
projection of a $ (k-1) $-simplex.
Since both are clearly nice $ 0/1 $-extensions if $ P $ is a $ 0/1 $-polytope, we conclude:
\begin{proposition}
    \label{prop:trivialupperbounds}
    For a $ 0/1 $-polytope $ P $ it holds that
    \begin{itemize}
        \item $ \xcs(P) \leq \numfacets(P) $ and
        \item $ \xcs(P) \leq \numverts(P) $. \qed
    \end{itemize}
\end{proposition}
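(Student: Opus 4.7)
The plan is to exhibit, for each of the two bounds, an explicit nice $0/1$-extension of the claimed size. Since the paragraphs immediately preceding the statement already describe both constructions informally, the task reduces to checking properties~(\ref{enum:zeroone})--(\ref{enum:vertexbijective}) of Definition~\ref{def:nice01} in each case.

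For $\xcs(P) \leq \numfacets(P)$, I would take $Q := P$ together with $\pi$ the identity on $\R^p$. Trivially $\pi(Q) = P$ and the extension has size $\numfacets(P)$. Property~(\ref{enum:zeroone}) holds because $P$ is assumed to be a $0/1$-polytope, and the identity maps each vertex of $Q = P$ to itself, yielding a vertex-to-vertex bijection and hence (\ref{enum:vertexpreserving}) and (\ref{enum:vertexbijective}).

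For $\xcs(P) \leq \numverts(P)$, let $v^1, \dotsc, v^k$ be the vertices of $P$, where $k = \numverts(P)$. Set $Q := \Delta_k$ and $\pi(\lambda) := \sum_{i=1}^k \lambda_i v^i$, which is linear on $\R^k$. The convex-hull identity displayed just before the statement shows $\pi(Q) = P$, and $\Delta_k$ is cut out by the $k$ facet-inequalities $\lambda_i \geq 0$ together with the single equation $\sum_i \lambda_i = 1$, so the size is exactly $k = \numverts(P)$. Property~(\ref{enum:zeroone}) holds because the vertices of $\Delta_k$ are the unit vectors $\unitvec^1,\dotsc,\unitvec^k \in \{0,1\}^k$. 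For~(\ref{enum:vertexpreserving}) and~(\ref{enum:vertexbijective}), note $\pi(\unitvec^i) = v^i$; since the $v^i$ are pairwise distinct vertices of $P$, this is a bijection between the $k$ vertices of $Q$ and the $k$ vertices of $P$.

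There is essentially no substantive obstacle. The only points requiring a moment of care are that $\pi$ must be linear rather than affine in the second construction (which is automatic, as $\lambda \mapsto \sum_i \lambda_i v^i$ is defined and linear on all of $\R^k$) and that $\Delta_k$ contributes no spurious vertices beyond the $\unitvec^i$, which is a standard fact about the standard simplex.
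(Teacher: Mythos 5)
Your proposal is correct and follows exactly the paper's route: the paper establishes both bounds via the same two constructions ($Q=P$ with the identity, and $Q=\Delta_k$ with $\pi(\lambda)=\sum_i\lambda_i v^i$) in the paragraph preceding the statement, and simply notes that both are clearly nice $0/1$-extensions. Your write-up only adds the explicit (routine) verification of properties (i)--(iii) of Definition~\ref{def:nice01}, which the paper leaves implicit.
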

\noindent
In fact, for some $ 0/1 $-polytopes such trivial extensions are indeed best possible.
Let us now describe two more subtle ways to construct extended formulations.

\subsection{Unions of Polytopes}
For a polyhedron $ Q = \{ y : Ay \leq b \} \subseteq \R^q $ let us denote its homogenization cone by
\[
    \homog(Q) = \{ (y,\lambda) : Ay \leq \lambda b, \, \lambda \geq 0 \}
    = \{ (y,\lambda) : y \in \lambda \cdotp Q, \, \lambda \geq 0 \} \subseteq \R^{q+1}.
\]
Since Balas' famous work~\cite{Balas79} on Disjunctive Programming, we know that for polytopes $ P_1,\dotsc,P_k \in \R^p
$, the convex hull of their union $ P = \conv( \cup_{i=1}^k P_i ) $ can be described via
\[
    P = \Big\{ \sum\nolimits_{i=1}^k x^i : (x^i,\lambda_i) \in \homog(P_i) \ \forall \, i \in [k], \,
    \sum\nolimits_{i=1}^k \lambda_i = 1 \Big\}.
\]
Given extensions $ (Q_i,\pi_i) $ for each $ P_i $, as a direct consequence, we obtain
\begin{equation}
    \label{eq:disjunction}
    P = \Big\{ \sum\nolimits_{i=1}^k \pi_i(y^i) : (y^i,\lambda_i) \in \homog(Q_i) \ \forall \, i \in [k], \,
    \sum\nolimits_{i=1}^k \lambda_i = 1 \Big\}.
\end{equation}
Thus, choosing each extension $ (Q_i,\pi_i) $ of minimum size, this immediatly implies the well-known upper bound $
\xc(P) \leq \sum_{i=1}^k (\xc(P_i) + 1) $, see, e.g., \cite{Kaibel11a}.
However, we show that this bound can be slightly improved in most of the cases:
\begin{theorem}
    \label{thm:xcunion}
    For polytopes $ P_1, \dotsc, P_k \in \R^p $ it holds that
    \[
        \xc\big( \conv(\cup_{i=1}^k P_i)\big) \leq \sum_{i=1}^k \xc(P_i) + |\{i \in [k] : \dim(P_i) = 0 \}|.
    \]
\end{theorem}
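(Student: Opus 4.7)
The plan is to revisit Balas's disjunctive extended formulation~(\ref{eq:disjunction}) and count inequalities carefully. For each $i$, fix a minimum-size extension $(Q_i,\pi_i)$ of $P_i$ with $\numfacets(Q_i) = \xc(P_i)$, taking $Q_i$ to be a single point when $\dim P_i = 0$ so that $\xc(P_i) = 0$. The natural outer description of $\homog(Q_i) = \{(y^i,\lambda_i) : A_i y^i \leq \lambda_i b_i,\ \lambda_i \geq 0\}$ uses $\numfacets(Q_i) + 1$ inequalities, while the equation $\sum_i \lambda_i = 1$ contributes no facet to the resulting extension of $P$. Thus the naive count reproduces the known bound $\sum_i(\xc(P_i)+1)$.

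The heart of the argument is to show that whenever $\dim P_i \geq 1$, the inequality $\lambda_i \geq 0$ is already implied by $A_i y^i \leq \lambda_i b_i$ and may therefore be dropped from the formulation. In that case $\dim Q_i \geq 1$, because $\pi_i(Q_i) = P_i$. Suppose toward a contradiction that some $(y^i,\lambda_i)$ satisfies $A_i y^i \leq \lambda_i b_i$ with $\lambda_i < 0$. Dividing by $\lambda_i$ and flipping signs, $z := y^i/\lambda_i$ would satisfy $A_i z \geq b_i$. I plan to rule this out as follows: for any $y \in Q_i$, subtracting $A_i y \leq b_i$ yields $A_i(z-y) \geq 0$, so $z - y \in \{w : A_i w \geq 0\} = -\rec(Q_i) = \{\zerovec\}$, where the last equality uses that $Q_i$ is bounded. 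Hence $z = y$ for every $y \in Q_i$, forcing $Q_i$ to be a single point and contradicting $\dim Q_i \geq 1$.

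Assembling the bookkeeping, the extension produces $\xc(P_i)$ inequalities for each $i$ with $\dim P_i \geq 1$, and exactly one inequality (namely $\lambda_i \geq 0$, which genuinely cannot be dropped) for each $i$ with $\dim P_i = 0$, yielding the claimed total. The main obstacle is the redundancy argument for $\lambda_i \geq 0$ above; once it is isolated the counting is immediate, and no properties of the individual $\pi_i$ are needed beyond $\pi_i(Q_i) = P_i$.
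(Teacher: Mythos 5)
Your strategy coincides with the paper's: start from Balas's formulation~\eqref{eq:disjunction} and show that for each $i$ with $\dim(P_i)\geq 1$ the inequality $\lambda_i\geq 0$ is implied by $A_iy^i\leq\lambda_i b_i$, so that $\homog(Q_i)$ costs no more facets than $Q_i$, while each point contributes exactly one inequality. Your redundancy argument (divide by $\lambda_i<0$, get $A_iz\geq b_i$ for $z=y^i/\lambda_i$, deduce $z-y\in -\rec(Q_i)=\{\zerovec\}$ for every $y\in Q_i$, hence $Q_i$ is a single point, contradicting $\dim(Q_i)\geq\dim(P_i)\geq 1$) is correct and somewhat more direct than the paper's, which instead supposes $\lambda\geq 0$ is facet-defining and derives the contradiction $1=\dim(\rec(Q_j)\times\{0\})+1=\dim(\homog(Q_j))=\dim(Q_j)+1\geq 2$. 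Both arguments, however, rest on exactly the same fact: $\rec(Q_i)=\{\zerovec\}$.

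That is where your write-up has a genuine gap. You ``fix a minimum-size extension $(Q_i,\pi_i)$'' and later invoke ``that $Q_i$ is bounded,'' but a minimum-size extension is a priori only a polyhedron with $\pi_i(Q_i)=P_i$; nothing in the definition forces it to be bounded. If $Q_i$ is unbounded then $\rec(Q_i)\neq\{\zerovec\}$, the step $z-y\in-\rec(Q_i)=\{\zerovec\}$ collapses, and the conclusion can actually fail: for an unbounded polyhedron the inequality $\lambda\geq 0$ may well be facet-defining (e.g.\ $Q=\R_+=\{y:-y\leq 0\}$ has one facet while $\homog(Q)=\R^2_+$ has two). The paper closes precisely this hole with Lemma~\ref{lem:polytope} --- every polytope admits a minimum-size extension that is itself a polytope --- proved in the appendix by cutting an unbounded minimum-size extension with a hyperplane orthogonal to a recession direction. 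You must either cite such a statement or prove it; once the $Q_i$ may legitimately be taken to be polytopes, the rest of your counting goes through.
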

\noindent
When proving Theorem~\ref{thm:xcunion}, we will make use of the following (known) useful fact.
\begin{lemma}
    \label{lem:polytope}
    For any polytope $ P $ there exists an extension $ (Q,\pi) $ of minimum size such that $ Q $ is a polytope.
\end{lemma}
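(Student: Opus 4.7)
The plan is to invoke Yannakakis' correspondence between extensions of $ P $ and nonnegative factorizations of its slack matrix, and to observe that a factorization of minimum rank automatically produces a bounded extension. After reducing to the case that $ P \subseteq \R^p $ is full-dimensional (pass to the affine hull of $ P $ and use the remark above to convert the resulting affine projection into a linear one without changing the facet count), write $ P = \{x \in \R^p : A x \le b\} $ with every row of $ A \in \R^{m \times p} $ defining a facet, and let $ v_1, \dots, v_n $ be the vertices of $ P $. Form the slack matrix $ S \in \R^{m \times n}_+ $ with $ S_{ij} = b_i - \langle A_i, v_j \rangle $. By Yannakakis' theorem, $ r := \xc(P) $ equals the nonnegative rank of $ S $, so I may pick a factorization $ S = T U $ with $ T \in \R^{m \times r}_+ $ and $ U \in \R^{r \times n}_+ $. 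I would first observe that no column of $ T $ can be the zero vector: otherwise deleting that column and the corresponding row of $ U $ would yield a factorization of smaller nonnegative rank, contradicting the minimality of $ r $.

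Next I would write down the standard Yannakakis extension
\[
    Q := \{ (x,y) \in \R^p \times \R^r : A x + T y = b, \; y \ge \zerovec \},
\]
with $ \pi $ the coordinate projection onto the $ x $-block. That $ \pi(Q) = P $ is routine: any $ (x,y) \in Q $ satisfies $ A x = b - T y \le b $ by entrywise nonnegativity of $ T $ and $ y $, while any $ v = \sum_j \lambda_j v_j \in P $ lifts via $ y := U \lambda \ge \zerovec $ using $ T y = T U \lambda = S \lambda = b - A v $. The central step is showing that $ Q $ is bounded: since $ P $ is bounded, $ \pi(\rec(Q)) \subseteq \rec(P) = \{\zerovec\} $, so any $ (x,y) \in \rec(Q) $ has $ x = \zerovec $ and hence $ T y = \zerovec $; combining $ y \ge \zerovec $, entrywise $ T \ge 0 $, and the absence of zero columns in $ T $ then forces $ y = \zerovec $. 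Thus $ \rec(Q) = \{\zerovec\} $ and $ Q $ is a polytope, whose outer description uses exactly the $ r $ inequalities $ y_i \ge 0 $, so $ \numfacets(Q) \le r $; combined with the lower bound $ \xc(P) = r $ on the size of any extension, this forces equality.

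The main (though mild) obstacle is the boundedness step, which relies on all three ingredients—boundedness of $ P $, the sign constraints $ y \ge \zerovec $, and the absence of zero columns in $ T $—coming together; it is precisely this combination that would fail had one started from a non-minimal factorization. A purely geometric attempt, such as truncating recession directions of an arbitrary minimum-size polyhedral extension, seems harder, because both intersecting with an additional halfspace and quotienting out $ \mathrm{span}(\rec(Q)) $ tend to increase the facet count.
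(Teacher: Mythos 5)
Your argument is correct, but it takes a genuinely different route from the paper. You invoke Yannakakis' theorem (which the paper only introduces in Section~3) and realize a minimum nonnegative factorization $ S = TU $ geometrically as the slice $ Q = \{(x,y) : Ax + Ty = b, \ y \geq \zerovec\} $ of the nonnegative orthant; the key points --- that a minimum factorization has no zero column in $ T $, and that this together with $ \rec(P) = \{\zerovec\} $ forces $ \rec(Q) = \{\zerovec\} $ --- are sound, and the count $ \numfacets(Q) \leq r = \xc(P) $ closes the argument. The paper instead gives a short self-contained geometric proof: among all minimum-size extensions it chooses one, $ (Q,\pi) $, with $ \dim(Q) $ minimal; writing $ Q = \conv(V) + \rec(Q) $, if $ Q $ were unbounded then $ \rec(Q) \subseteq \ker\pi $ (because $ P $ is bounded), and for $ c \in \rec(Q) \setminus \{\zerovec\} $ and $ \gamma = \max_{v \in \conv(V)} \langle c,v\rangle $ the cross-section $ Q' = \{ y \in Q : \langle c,y\rangle = \gamma \} $ still projects onto $ P $, has at most as many facets, and has strictly smaller dimension --- contradicting the choice of $ Q $. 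So the ``purely geometric attempt'' you dismiss at the end does work; the trick is to take the contradiction from the dimension drop rather than from the facet count of a truncation. Your version buys an explicit canonical minimum-size extension (an affine slice of $ \R^r_+ $, which is in fact the stronger normal form behind Yannakakis' theorem) at the cost of a forward --- though non-circular --- dependence on that theorem; the paper's version is elementary and makes no reference to slack matrices.
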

\begin{proof}
    \renewcommand{\qedsymbol}{}     See Appendix~\ref{proof:polytope}.
\end{proof}
\begin{proof}[Proof of Theorem~\ref{thm:xcunion}]
    For $ i=1,\dotsc,k $ let $ (Q_i,\pi_i) $ be a minimum size extension for $ P_i $.
    By Lemma~\ref{lem:polytope}, we further may assume that all $ Q_i $'s are polytopes.
    By equation~\eqref{eq:disjunction}, it suffices to show that $ \numfacets(\homog(Q_i)) \leq \numfacets(Q_i) $ for
    all $ i \in [k] $ with $ \dim(P_i) > 0 $.

    Towards this end, suppose that $ \dim(P_j) > 0 $ and let $ Q_j = \{ y : Ay \leq b \} $.
    We will show that $ \homog(Q_j) = \{ (y,\lambda) : Ay \leq \lambda b, \, \lambda \in \R \} $ (and hence $
    \numfacets(\homog(Q_j)) \leq \numfacets(Q_j) $) holds.
    In order to show that the inequality $ \lambda \geq 0 $ is indeed not facet-defining for $ \homog(Q_j) $ (and hence
    redundant), let us assume the contrary and obtain
    \begin{align*}
        \dim(\homog(Q_j))
        & = \dim(\{(y,0) \in \homog(Q_j) \}) + 1 \\
        & = \dim(\{(y,0) : Ay \leq \zerovec \}) + 1 \\
        & = \dim(\rec(Q_j) \times \{0\}) + 1 \\
        & = \dim(\{\zerovec\} \times \{0\}) + 1 = 1.
    \end{align*}
    On the other hand, we have that $ \dim(\homog(Q_j)) = \dim(Q_j) + 1 \geq \dim(P_j) + 1 \geq 2 $, a contradiction.
\end{proof}
\noindent
It turns out that Theorem~\ref{thm:xcunion} can even be rephrased in terms of $ \xcs(\cdot) $ under a further
assumption:
\begin{theorem}
    \label{thm:xcsunion}
    For disjoint $ 0/1 $-polytopes $ P_1, \dotsc, P_k \in \R^p $ it holds that
    \[
        \xcs\big( \conv(\cup_{i=1}^k P_i)\big) \leq \sum_{i=1}^k \xcs(P_i) + |\{i : \dim(P_i) = 0 \}|.
    \]
\end{theorem}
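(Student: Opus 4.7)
My plan is to imitate the proof of Theorem~\ref{thm:xcunion}, but starting from nice $0/1$-extensions of each $P_i$ and carefully checking that the disjunctive construction from equation~\eqref{eq:disjunction} preserves all three properties of a nice $0/1$-extension.

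For $i \in [k]$, I would choose a nice $0/1$-extension $(Q_i, \pi_i)$ of $P_i$ of minimum size $\xcs(P_i)$. Since every $0/1$-polytope is bounded, each $Q_i$ is automatically a polytope, so Lemma~\ref{lem:polytope} is not needed here. I would then form the extension $(Q,\pi)$ given by~\eqref{eq:disjunction} and bound $\numfacets(Q)$. The facet count follows by exactly the argument used for Theorem~\ref{thm:xcunion}: a facet of $Q$ comes from a facet of some $\homog(Q_i)$, and the inequality $\lambda_i \geq 0$ can be dropped from $\homog(Q_i)$ whenever $\dim(P_i) > 0$ (same dimension calculation as before), giving $\numfacets(Q) \leq \sum_i \xcs(P_i) + |\{i : \dim(P_i)=0\}|$.

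The main work is verifying that $(Q, \pi)$ is a nice $0/1$-extension. First I would identify the vertices of $Q$: since each $Q_i$ is bounded, $\rec(Q_i) = \{\zerovec\}$, so any vertex of $Q$ must have one $\lambda_i = 1$ and all others $0$, forcing $y^j = \zerovec$ for $j \neq i$ and $y^i$ a vertex of $Q_i$. Thus the vertices of $Q$ are in bijection with $\bigsqcup_i \mathrm{vert}(Q_i)$, and all their coordinates are in $\{0,1\}$, giving property~(\ref{enum:zeroone}). For property~(\ref{enum:vertexpreserving}), such a vertex projects to $\pi_i(v)$, a vertex of $P_i$ by the corresponding property of $(Q_i,\pi_i)$; this is automatically a vertex of $P = \conv(\cup_i P_i)$ because $P \subseteq [0,1]^p$ and any $\{0,1\}^p$-point in $[0,1]^p$ is an extreme point.

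For property~(\ref{enum:vertexbijective}), I would argue that each vertex $v$ of $P$ lies in $\{0,1\}^p$ and in $\bigcup_i P_i$; by disjointness, $v$ belongs to a unique $P_{i(v)}$ and is a vertex there, and by property~(\ref{enum:vertexbijective}) for $(Q_{i(v)},\pi_{i(v)})$ there is a unique vertex of $Q_{i(v)}$ projecting to $v$, hence a unique vertex of $Q$ doing so. The main obstacle is precisely this last step: without disjointness, the same vertex could be hit from two different $P_i$, producing two preimages in $Q$ and violating~(\ref{enum:vertexbijective}); this is why the hypothesis is required and where the proof of Theorem~\ref{thm:xcunion} has to be genuinely strengthened rather than merely transcribed.
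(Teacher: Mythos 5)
Your proposal is correct and follows essentially the same route as the paper: take nice $0/1$-extensions of the $P_i$, form the Balas construction, reuse the facet count from Theorem~\ref{thm:xcunion}, characterize the vertices of $Q$ as those with $\lambda$ a unit vector and $y^i$ a vertex of $Q_i$ (which is exactly what the paper establishes, via an explicit convexity perturbation argument that your sketch asserts rather than writes out), and use disjointness only for property~(\ref{enum:vertexbijective}). The paper even remarks after Corollary~\ref{cor:simplebalas} that disjointness is needed precisely for that last property, matching your closing observation.
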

\begin{proof}
    For $ i=1,\dotsc,k $ let $ (Q_i,\pi_i) $ be a nice $0/1$-extension.
    Since the $ Q_i $'s are polytopes and due to the proof of Theorem~\ref{thm:xcunion}, it suffices to show that the
    polytope
    \[
        Q = \Big\{ (y^1,\dotsc,y^k,w) : (y^i,w_i) \in \homog(Q_i) \ \forall \, i \in [k], \, \sum\nolimits_{i=1}^k
        w_i = 1 \Big\}.
    \]
    together with $ \pi(y^1,\dotsc,y^k,w) = \sum_{i=1}^k \pi_i(y^i) $ is nicely $ 0/1 $.
    Towards this end, let $ v = (y^1,\dotsc,y^k,w) $ be a vertex of $ Q $ and let us define
    \[
        x^j := \begin{cases} \frac{1}{w_j} y^j & \text{ if } w_j > 0, \\
        \zerovec & \text{ if } w_j = 0.
        \end{cases}
    \]
    Note that $ x^j \in Q_j $ if $ w_j > 0 $.
    Let us recall that $ (y^j,0) \in \homog(Q_j) $ if and only if $ y^j \in \rec(Q_j) $.
    Since $ Q_j $ is a polytope, we obtain that $ (y^j,0) \in \homog(Q_j) $ if and only if $ y^j = \zerovec $.
    Thus, we get
    \begin{equation}
        \label{eq:zeroonebalas1}
        w_j x^j = y^j
    \end{equation}
    as well as
    \begin{equation}
        \label{eq:zeroonebalas2}
        (\tilde{w}_j x^j, \tilde{w}_j) \in \homog(Q_j) \quad \forall \ j \in [k], \, \tilde{w}_j \geq 0.
    \end{equation}
    We claim that $ w $ is a vertex of $ \Delta_k $.
    If not, then there exist $ \overline{w}, \underline{w} \in \Delta_k $ with $ \overline{w} \neq \underline{w} $ such
    that $ w = \mu \cdotp \overline{w} + (1-\mu) \cdotp \underline{w} $ for some $ \mu \in (0,1) $.
    By equation~\eqref{eq:zeroonebalas2}, we have that $ (\overline{w}_j \cdotp x^j,\overline{w}_j), (\underline{w}_j
    \cdotp x^j,\underline{w}_j) \in \homog(Q_j) $ holds for all $ j \in [k] $ and hence
    \begin{align*}
        \overline{v} := (\overline{w}_1 \cdotp x^1, \dotsc, \overline{w}_k \cdotp x^k, \overline{w}) \in Q
        \quad \text{ and } \quad
        \underline{v} := (\underline{w}_1 \cdotp x^1, \dotsc, \underline{w}_k \cdotp x^k, \underline{w}) \in Q.
    \end{align*}
    On the other hand, we also have
    \[
        \mu \cdotp \overline{w}_j \cdotp x^j + (1-\mu) \cdotp \underline{w}_j \cdotp x^j = w_j x^j
        \stackrel{\eqref{eq:zeroonebalas1}}{=} y^j,
    \]
    a contradiction to $ v $ being a vertex of $ Q $.

    Since $ w $ is a vertex of $ \Delta_k $, by relabeling the indices, we may assume that $ w_1 = 1 $ and $ w_j = 0 $
    for all $ j \in \{2,\dotsc,k\} $.
    As mentioned above, this implies $ y^j = \zerovec $ for all $ j \in \{2,\dotsc,k\} $.
    We claim that $ y^1 $ is a vertex of $ Q_1 $.
    For the sake of a contradiction let us assume that there exist $ \overline{y}, \underline{y} \in Q_1 $ with $
    \overline{y} \neq \underline{y} $ such that $ y^1 = \mu \cdotp \overline{y} + (1-\mu) \cdotp \underline{y} $ for
    some $ \mu \in (0,1) $.
    Since $ w_1 = 1 $, we have that $ (\overline{y},w_1), (\underline{y},w_1) \in \homog(Q_1) $ and hence
    \begin{align*}
        \overline{v} := (\overline{y},\zerovec,\dotsc,\zerovec,w) \in Q
        \quad \text{ and } \quad
        \underline{v} := (\underline{y},\zerovec,\dotsc,\zerovec,w) \in Q.
    \end{align*}
    On the other hand, we also have $ v = \mu \cdotp \overline{v} + (1-\mu) \cdotp \underline{v} $, which again is a
    contradiction to $ v $ being a vertex of $ Q $.

    Since $ Q_1 $ is a $ 0/1 $-polytope, $ y^1 $ is also a $ 0/1 $-vector and so is $ v $.
    Moreover, since $ (Q_1,\pi_1) $ is a nice $ 0/1 $-extension, we obtain that $ \pi(v) = \pi_1(y^1) $ is a vertex of
    $ P_1 $ and hence a vertex of $ P := \conv\big(\cup_{i=1}^k P_i \big) $.
    Thus, $ Q $ is indeed a $ 0/1 $-polytope and each vertex of $ Q $ is projected onto a vertex of $ P $.

    In order to verify property~(\ref{enum:vertexbijective}), let $ v $ be a vertex of $ P $.
    Since the $ P_i $'s are $ 0/1 $-polytopes, there exists an index $ \ell \in [k] $ such that $ v $ is a vertex of $
    P_{\ell} $.
    Further, let $ v' = (y^1,\dotsc,y^k,w) $ be a vertex of $ Q $ such that $ \pi(v') = v $.
    We have seen that there exists an index $ i \in [k] $ such that $ y^i $ is a vertex of $ Q_i $, $ w_i = 1 $ and $
    y^j = \zerovec $ for all $ j \in [k] \setminus \{i\} $.
    Since $ v = \pi(v') = \pi_i(y^i) \in P_i $ and the $ P_j $'s are pairwise disjoint, we obtain that $ i = \ell $.
    Finally, since $ (Q_i,\pi_i) $ is nicely $0/1$, $ y^i $ is uniquely determined by $ v $ and so is $ v' $.
\end{proof}
\noindent
Surprisingly, our computations considerably benefit from this simple consequence:
\begin{corollary}
    \label{cor:simplebalas}
    For any $ 0/1 $-polytope $ P \subseteq \R^p $ with $ \dim(P) \geq 1 $ and any point $ v \in \{0,1\}^p \setminus P $,
    it holds that
    \[
        \xcs\big(\conv(P \cup \{v\})\big) \leq \xcs(P) + 1. \qed
    \]
\end{corollary}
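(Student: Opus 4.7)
The plan is to apply Theorem~\ref{thm:xcsunion} directly with $k = 2$, $P_1 := P$, and $P_2 := \{v\}$. To do this I would first verify that the hypotheses of that theorem are met: both $P_1$ and $P_2$ are $0/1$-polytopes (by assumption on $P$ and since $v \in \{0,1\}^p$), and they are disjoint because $v \notin P$. So Theorem~\ref{thm:xcsunion} is applicable and yields
\[
    \xcs\bigl(\conv(P \cup \{v\})\bigr) \leq \xcs(P) + \xcs(\{v\}) + |\{i \in \{1,2\} : \dim(P_i) = 0\}|.
\]

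Next I would pin down the two quantities on the right besides $\xcs(P)$. Since $\dim(P) \geq 1$ by hypothesis and $\dim(\{v\}) = 0$, the cardinality in the last term equals exactly $1$. For $\xcs(\{v\})$, the trivial extension $(Q,\pi) = (\{v\}, \mathrm{id})$ is a nice $0/1$-extension of $\{v\}$ (it is a $0/1$-polytope, its unique vertex is projected onto the unique vertex $v$ of $\{v\}$, and this correspondence is bijective), and a $0$-dimensional polytope has no facets, so $\xcs(\{v\}) = 0$. Plugging these values into the displayed inequality gives the claimed bound $\xcs(P) + 1$.

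There is essentially no hard step here; the work is all absorbed in Theorem~\ref{thm:xcsunion}. The only thing one needs to be slightly careful about is the convention that a $0$-dimensional polytope has size $0$, which is why the ``$+1$'' surcharge for zero-dimensional pieces in Theorem~\ref{thm:xcsunion} is precisely what makes the final bound come out to $\xcs(P) + 1$ rather than $\xcs(P)$.
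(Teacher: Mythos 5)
Your proof is correct and matches the paper's intent exactly: the corollary is stated with an immediate \qed precisely because it follows from Theorem~\ref{thm:xcsunion} applied to $P_1 = P$ and $P_2 = \{v\}$, with disjointness given by $v \notin P$, $\xcs(\{v\}) = 0$, and the $+1$ contributed by the single zero-dimensional piece. Nothing to add.
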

\noindent
Let us remark that the assumption of disjointness in the above statements is only required to satisfy
property~(\ref{enum:vertexbijective}) of Definition~\ref{def:nice01}.

\subsection{Reflections}
\label{sec:reflections}
Another general method to construct extended formulations for polytopes $ P $ has been introduced by Kaibel and
Pashkovich~\cite{KaibelP13}.
It is again based on the assumption that $ P $ can be written as the convex hull of the union of two polytopes $ P' $
and $ P'' $.
Further, it requires $ P'' $ to be very similar to $ P' $, namely being the reflection of $ P' $ through some
hyperplane.

For $ a \in \R^p \setminus \{ \zerovec \} $ and $ \beta \in \R $ let $ \Hleq(a,\beta) := \{ x : \langle a,x \rangle \leq
\beta \} $ denote the associated halfspace.
We further define $ \Heq(a,\beta) := \{ x : \langle a,x \rangle = \beta \} $ as the corresponding hyperplane.
The reflection $ \varphi_H \colon \R^p \to \R^p $ at $ H = \Heq(a,\beta) $ can be written as
\[
    \varphi_H(x) = x + 2 \cdotp \frac{\beta - \langle a,x \rangle}{\|a\|^2} \cdotp a.
\]
Suppose that $ P' \subseteq \Hleq(a,\beta) $ and $ P'' = \varphi_H(P') $.
Although \cite{KaibelP13} addresses to a slightly more general type of reflection, their results imply the simple
extended formulation
\[
    \conv(P' \cup P'') = \bigg\{ x + \lambda \cdotp \frac{2}{\|a\|^2} \cdotp a : x \in P', \, 0 \leq \lambda \leq \beta
    - \langle a,x \rangle \bigg\},
\]
where replacing $ P' $ again by an extension $ (Q',\pi') $ yields
\begin{equation}
    \label{eq:reflection}
    \conv(P' \cup P'') = \bigg\{ \pi'(y) + \lambda \cdotp \frac{2}{\|a\|^2} \cdotp a : y \in Q', \, 0 \leq \lambda \leq
    \beta - \langle a,\pi'(y) \rangle \bigg\}.
\end{equation}
In terms of a bound on the general extension complexity, we conclude:
\begin{theorem}[\cite{KaibelP13}]
    \label{thm:reflection}
    Let $ P' \subseteq \R^p $ and $ a \in \R^p \setminus \{ \zerovec \} $, $ \beta \in \R $ such that $ P' \subseteq
    \Hleq(a,\beta) $. Then
    \[
        \xc\Big(\conv\big(P' \cup \varphi_H(P')\big)\Big) \leq \xc(P') + 2.
    \]
\end{theorem}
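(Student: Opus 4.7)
The plan is essentially to read off the desired bound from the explicit extended formulation in equation~\eqref{eq:reflection}, so the proof amounts to picking the right extension of $P'$ and counting inequalities carefully.

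First I would fix a minimum-size extension $(Q',\pi')$ of $P'$, i.e.\ one with exactly $\xc(P')$ facets, and invoke Lemma~\ref{lem:polytope} to assume that $Q'$ is a polytope. Write $Q' = \{y : Ay \leq b\}$ with at most $\xc(P')$ rows in $A$. Then define
\[
    Q := \bigl\{ (y,\lambda) \in \R^q \times \R : Ay \leq b,\ \lambda \geq 0,\ \langle a, \pi'(y)\rangle + \lambda \leq \beta \bigr\}
\]
and the linear map $\pi(y,\lambda) := \pi'(y) + \lambda \cdot \frac{2}{\|a\|^2} \cdot a$. Note that $\pi$ is linear (not merely affine) because the reflection shift is proportional to $\lambda$, and that the constraint $\langle a,\pi'(y)\rangle + \lambda \leq \beta$ is linear in $(y,\lambda)$ since $\pi'$ is linear.

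Next I would verify that $(Q,\pi)$ is indeed an extension of $\conv\bigl(P' \cup \varphi_H(P')\bigr)$. This is exactly the content of equation~\eqref{eq:reflection}: the set on the right-hand side there is precisely $\pi(Q)$, and it equals $\conv(P' \cup \varphi_H(P'))$ by the construction of Kaibel and Pashkovich recalled just above that equation. Finally, counting defining inequalities of $Q$ gives at most $\xc(P')$ (from the description of $Q'$) plus $2$ (from $\lambda \geq 0$ and $\langle a,\pi'(y)\rangle + \lambda \leq \beta$), so $\numfacets(Q) \leq \xc(P') + 2$, yielding the claimed bound.

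There is no real obstacle here; the only mild technicality is ensuring that the outer description has the right number of inequalities, which is why we invoke Lemma~\ref{lem:polytope} to avoid worrying about implicit equations or a mismatch between facet counts and inequality counts for an unbounded $Q'$. The bound is an upper bound on the facet count of the specific $Q$ we construct, so there is no need to check that any of the inequalities are actually facet-defining.
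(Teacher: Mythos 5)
Your proposal is correct and follows essentially the same route as the paper, which justifies this theorem (cited from Kaibel--Pashkovich) precisely by substituting a minimum-size extension of $P'$ into equation~\eqref{eq:reflection} and counting the two additional inequalities $0 \leq \lambda$ and $\lambda \leq \beta - \langle a,\pi'(y)\rangle$. Your extra invocation of Lemma~\ref{lem:polytope} is harmless but not strictly needed here, since the inequality count works for an unbounded $Q'$ as well.
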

\noindent
In Section~\ref{sec:computations}, we will make use of two very specific types of reflections:
First, for any $ i \in [p] $ let $ \flipmap \colon \R^p \to \R^p $ be the map that flips the $ i $th entry of vectors $
x \in \R^p $, i.e.,
\[
    \flipmap(x)_{\ell} := \begin{cases} 1-x_i & \text{ if } \ell = i \\
        x_{\ell} & \text{ else.} \end{cases}
\]
This map coincides with the reflection map $ \varphi_H $ that is induced by setting $ \beta = \pm 1 $, $ a_i = \pm2 $
and $ a_{\ell} = 0 $ for all $ \ell \in [p] \setminus \{i\} $.
Second, for any $ i,j \in [p] $ with $ i \neq j $ let $ \swapmap \colon \R^p \to \R^p $ be the map that swaps the $ i
$th and $ j $th coordinate of vectors $ x \in \R^p $, i.e.,
\[
    \varphi_{\mathrm{swap},i,j}(x)_{\ell} := \begin{cases} x_j & \text{ if } \ell = i \\
        x_i & \text{ if } \ell = j \\
        x_{\ell} & \text{ else.} \end{cases}
\]
This is equivalent to the reflection map that is induced by setting $ \beta = 0 $, $ a_i = \pm 1 $, $ a_j = \mp 1 $
and $ a_{\ell} = $ for all $ \ell \in [p] \setminus \{i,j\} $.

We say that $ (a,\beta) $ \emph{induces a symmetry of the cube} if it is of one of the above types.
Note that in all these cases, for all $ v \in \{0,1\}^p $, the expression $ \beta - \langle a,v \rangle $ attains only
values in $ \{ 0,t \} $ for $ t \in \{-1,1\} $ ($ t $ depends on the orientation of the associated halfspace we used).
This property allows us to make an analogous statement as in Theorem~\ref{thm:reflection} for sizes of nice $ 0/1
$-extensions:
\begin{theorem}
    \label{thm:reflectionsxcs}
    Let $ P' \subseteq \R^p $ be a $ 0/1 $-polytope and let $ a \in \R^p, \beta \in \R $ induce a symmetry of the cube
    with $ P' \subseteq \Hleq(a,\beta) $. Then
    \[
        \xcs\Big(\conv\big(P' \cup \varphi_H(P')\big)\Big) \leq \xcs(P') + 2.
    \]
\end{theorem}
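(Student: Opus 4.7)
The plan is to apply the reflection extended formulation of equation~\eqref{eq:reflection}, but starting from a \emph{nice} $0/1$-extension $(Q', \pi')$ of $P'$ of size $\xcs(P')$. Concretely, set
\[
    Q := \{(y, \lambda) : y \in Q',\ 0 \leq \lambda \leq \beta - \langle a, \pi'(y)\rangle\} \subseteq \R^{q+1},
\]
with linear projection $\pi(y, \lambda) := \pi'(y) + \lambda \cdot \frac{2}{\|a\|^2} \cdot a$. The facet-defining inequalities of $Q'$ together with the two bounds on $\lambda$ give a representation of $Q$ of size at most $\xcs(P') + 2$, and equation~\eqref{eq:reflection} already tells us that $\pi(Q) = \conv(P' \cup \varphi_H(P'))$. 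So the task is to verify that $(Q, \pi)$ is a nice $0/1$-extension of $P := \conv(P' \cup \varphi_H(P'))$.

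The main step is identifying the vertices of $Q$. Writing $f(y) := \beta - \langle a, \pi'(y)\rangle \geq 0$, the polytope $Q$ lies between the floor $Q' \times \{0\}$ and the graph of the affine function $f$ over $Q'$; a standard argument (any point with $0 < \lambda < f(y)$ is a midpoint of two other points in $Q$) shows that its vertices are exactly the points $(v, 0)$ and $(v, f(v))$ for $v$ a vertex of $Q'$, the two coinciding when $f(v) = 0$. Since $(Q', \pi')$ is nice, each $\pi'(v)$ is a $0/1$-vertex of $P'$. The hypothesis that $(a, \beta)$ induces a symmetry of the cube then guarantees that $\beta - \langle a, w\rangle \in \{0, t\}$ for all $w \in \{0,1\}^p$ and some fixed $t \in \{-1, 1\}$; combined with $P' \subseteq \Hleq(a, \beta)$ this forces $f(v) \in \{0, 1\}$ in the relevant orientation $t = 1$. (If instead $t = -1$, then $P' \subseteq \Heq(a, \beta)$, so $\varphi_H$ fixes $P'$ and the bound is trivial.) Hence every vertex of $Q$ is a $0/1$-vector, establishing property~(i) of Definition~\ref{def:nice01}.

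For properties~(ii) and~(iii) I would carry out the following case analysis. A vertex $(v, 0)$ maps under $\pi$ to $\pi'(v)$, a vertex of $P'$; a vertex $(v, 1)$, which only arises when $f(v) = 1$, maps to $\varphi_H(\pi'(v))$, a vertex of $\varphi_H(P')$. Both images are vertices of $P$, since vertices lying strictly on one side of $\Heq(a, \beta)$ remain extreme in the union, while vertices on $\Heq$ are shared between $P'$ and $\varphi_H(P')$. For uniqueness, fix a vertex $w$ of $P$. If $w \in P'$, niceness of $(Q', \pi')$ gives a unique $v \in Q'$ with $\pi'(v) = w$, and hence the preimage $(v, 0) \in Q$; any second preimage from the ``top'' would force $\varphi_H(w) \in P'$, which is impossible when $w \notin \Heq(a, \beta)$, and when $w \in \Heq$ the two candidate preimages coincide because $f(v) = 0$. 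The case $w \in \varphi_H(P') \setminus P'$ is symmetric via the involution $\varphi_H$. The crux of the whole argument, and the only place where the cube-symmetry assumption is indispensable, is the integrality $f(v) \in \{0, 1\}$: without it the new $\lambda$-coordinate could take non-$0/1$ values at vertices of $Q$, immediately breaking property~(i).
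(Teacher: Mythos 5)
Your proposal follows the paper's proof essentially step for step: the same lift $Q = \{(y,\lambda) : y \in Q',\ 0 \le \lambda \le \beta - \langle a,\pi'(y)\rangle\}$ over a nice $0/1$-extension, the same identification of the vertices of $Q$ as the points $(v,0)$ and $(v, f(v))$ for $v$ a vertex of $Q'$, the same use of the cube-symmetry hypothesis to force $f(v) \in \{0,1\}$, and the same uniqueness argument via the involution $\varphi_H$. The only (harmless) deviations are that you dispose of the orientation issue by a case split on $t$ rather than by replacing $P'$ with $\varphi_H(P')$, and that you justify that the projected vertices are vertices of $P$ by an unproved general extremality claim about the union, where the paper simply observes that any $0/1$-point of a polytope contained in the unit cube is a vertex --- a fact that follows immediately from the $0/1$-integrality you have already established.
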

\begin{proof}
    By the above paragraph, we may assume that $ \beta - \langle a,v \rangle \in \{0,1\} $ holds for all vertices
    $ v $ of $ P' $.
    Otherwise, replace $ P' $ by $ \varphi_H(P') $, which is affinely isomorphic and hence $ \xcs(P') =
    \xcs(\varphi_H(P')) $.

    Let $ (Q',\pi') $ be a nice $ 0/1 $-extension of $ P' $. As mentioned above, the polytope
    \[
        Q = \big\{ (y,\lambda) : y \in Q', \, 0 \leq \lambda \leq \beta - \langle a,\pi'(y) \rangle \big\}
    \]
    together with the map $ \pi(y,\lambda) = \pi'(y) + \lambda \cdotp \frac{2}{\|a\|^2} $ is an extension for $ P :=
    \conv\big(P' \cup \varphi_H(P')\big) $.
    We have to show that $ (Q,\pi) $ is nicely $ 0/1 $.

    Let $ (y,\lambda) $ be a vertex of $ Q $.
    First, suppose that there exists an $ \varepsilon > 0 $ such that $ 0 \leq \lambda - \varepsilon \leq \lambda \leq
    \lambda + \varepsilon \leq \beta - \langle a,\pi'(y) \rangle $.
    In this case, we can write
    \[
        (y,\lambda) = \frac{1}{2} (y,\lambda - \varepsilon) + \frac{1}{2} (y,\lambda + \varepsilon),
    \]
    a contradiction since $ (y,\lambda - \varepsilon), (y,\lambda + \varepsilon) \in Q $ and $ (y,\lambda) $ is assumed
    to be a vertex of $ Q $.
    Thus, at least one of the inequalities $ 0 \leq \lambda $ and $ \lambda \leq \beta - \langle a,\pi'(y) \rangle $
    holds with equality.

    Second, we show that $ y $ is a vertex of $ Q' $.
    Let us assume that there are vectors $ y^1,y^2 \in Q' $ with $ y^1 \neq y^2 $ such that $ y = \mu \cdotp y^1 +
    (1-\mu) \cdotp y^2 $ for some $ \mu \in (0,1) $.
    If $ \lambda = 0 $, we have that $ (y,\lambda) = \mu \cdotp (y^1,0) + (1-\mu) \cdotp (y^2,0) $ as well as $ (y^1,0),
    (y^2,0) \in Q $ since $ \pi'(Q') \subseteq \Hleq(a,\beta) $, a contradiction to $ (y,\lambda) $ being a vertex of $
    Q $.
    If otherwise $ \lambda = \beta - \langle a,\pi'(y) \rangle $, let us set $ \lambda_i := \beta - \langle a,\pi'(y^i)
    \rangle \geq 0 $ for $ i=1,2 $.
    By construction, we again have that $ (y^1,\lambda_1), (y^2,\lambda_2) \in Q $.
    Since
    \begin{align*}
        \mu \lambda_1 + (1-\mu) \lambda_2
        & = \mu (\beta - \langle a,\pi'(y^1) \rangle) + (1-\mu) (\beta - \langle a,\pi'(y^2) \rangle) \\
        & = \beta - \langle a,\pi'(y) \rangle = \lambda,
    \end{align*}
    we obtain $ (y,\lambda) = \mu \cdotp (y^1,\lambda_1) + (1-\mu) \cdotp (y^2,\lambda_1) $ and hence again a
    contradiction to the assumption that $ (y,\lambda) $ is a vertex of $ Q $.

    Since $ (Q',\pi') $ is nicely $ 0/1 $, $ y $ is a $ 0/1 $-vector and $ v := \pi'(y) $ is a vertex of $ P' $.
    We have seen that $ \lambda \in \{0, \beta - \langle a,v \rangle \} $ holds.
    By our first assumption, this implies that $ \lambda \in \{0,1\} $ and hence $ (y,\lambda) $ is a $ 0/1 $-vector.
    Further, we have that
    \[
        \pi(y,\lambda) \in \{ \pi(y,0), \pi(y,\beta - \langle a,v \rangle) \} = \{ v, \varphi_H(v) \}
    \]
    and since $ \varphi_H $ is one of the cube's symmetries, $ \varphi_H(v) $ is a $ 0/1 $ point and hence a vertex of $
    P $ and so is $ \pi(y,\lambda) $.

    It remains to be shown that every vertex $ v^* $ of $ P $ has a unique preimage.
    Since again $ \varphi_H $ is one of the cube's symmetries and $ P' \subseteq \Hleq(a,\beta) $, there exists a unique
    vertex $ v' $ of $ P' $ such that $ v^* = v' $ or $ v^* = \varphi_H(v') $.
    Thus, if $ (y^*,\lambda^*) $ projects onto $ v^* $, we must have $ \pi'(y^*) = v' $.
    Since $ (Q',\pi') $ is nicely $ 0/1 $, $ y^* $ is uniquely determined by $ v' $ (and hence by $ v^* $).
    Further, $ \lambda^* $ is also uniquely determined by $ v^* $ depending on whether $ v^* = v' $ or $ v^* =
    \varphi_H(v') $ holds.
    Note that if $ \varphi_H(v^*) = v^* $, then $ \lambda^* = 0 $ in both cases.
\end{proof}
\noindent
Another simple consequence which our computations exploit is the following fact:
\begin{corollary}
    \label{cor:reflectionfacet}
    Let $ P' \subseteq \R^p $ be a $ 0/1 $-polytope and let $ a \in \R^p, \beta \in \R $ induce a symmetry of the cube
    with $ P' \subseteq \Hleq(a,\beta) $. If $ P' \cap H $ is a facet of $ P' $, then
    \[
        \xcs\Big(\conv\big(P' \cup \varphi_H(P')\big)\Big) \leq \numfacets(P') + 1.
    \]
\end{corollary}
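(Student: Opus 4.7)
The plan is to re-use the construction from the proof of Theorem~\ref{thm:reflectionsxcs}, but applied to the trivial nice $0/1$-extension of $P'$ by itself, and then to exploit the facet hypothesis to show that one of the inequalities in the resulting outer description is redundant.

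First I would take $(Q',\pi') := (P', \mathrm{id})$, which is obviously a nice $0/1$-extension of $P'$ and whose size is $\numfacets(P')$. Following Theorem~\ref{thm:reflectionsxcs}, I form
\[
    Q := \big\{ (x,\lambda) : x \in P', \ 0 \leq \lambda, \ \lambda + \langle a,x\rangle \leq \beta \big\},
\]
together with $\pi(x,\lambda) := x + \lambda \cdotp \tfrac{2}{\|a\|^2} \cdotp a$. The proof of Theorem~\ref{thm:reflectionsxcs} already establishes that $(Q,\pi)$ is a nice $0/1$-extension of $\conv(P' \cup \varphi_H(P'))$, so the only remaining task is to bound $\numfacets(Q)$.

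For this I would write an irredundant description $P' = \{ x : A'x \leq b', \ \langle a,x\rangle \leq \beta \}$, which is possible since by hypothesis $P' \cap H$ is a facet of $P'$, so the inequality $\langle a,x\rangle \leq \beta$ is facet-defining; the remaining $\numfacets(P')-1$ facet inequalities are collected in $A'x \leq b'$. Substituting this description into $Q$, one sees that the inequality $\langle a,x\rangle \leq \beta$ is implied by the pair $\lambda \geq 0$ and $\lambda + \langle a,x\rangle \leq \beta$, and hence is redundant in the description of $Q$. Counting the remaining inequalities gives $\numfacets(Q) \leq (\numfacets(P')-1) + 2 = \numfacets(P') + 1$, which is the desired bound.

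There is no real obstacle here; the whole point is to notice that the facet-defining hyperplane $H$ is precisely the one that disappears when the two new inequalities $\lambda \geq 0$ and $\lambda + \langle a,x\rangle \leq \beta$ are added. If anything, the only care needed is to make sure the two added inequalities themselves are not already redundant, but that is immediate since $P'$ has dimension $\geq 1$ and the coordinate $\lambda$ is new.
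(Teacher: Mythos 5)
Your proof is correct and follows essentially the same route as the paper: instantiate the construction of Theorem~\ref{thm:reflectionsxcs} with the trivial extension $(P',\mathrm{id})$, observe that the facet-defining inequality $\langle a,x\rangle \leq \beta$ becomes redundant once $\lambda \geq 0$ and $\lambda \leq \beta - \langle a,x\rangle$ are imposed, and count $(\numfacets(P')-1)+2$ inequalities. The paper's argument is identical in substance.
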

\begin{proof}
    Set $ k = \numfacets(P') $ and let $ A \in \R^{(k-1) \times p} $, $ b \in \R^{k-1} $ such that $ P' = \{ x \in \R^p
    : Ax \leq b, \, \langle a,x \rangle \leq \beta \} $.
    By (the proof of) Theorem~\ref{thm:reflectionsxcs}, we have that
    \[
        Q := \{ (x,\lambda) : x \in P', \, 0 \leq \lambda \leq \beta - \langle a,x \rangle \}.
    \]
    is a nice $ 0/1 $-extension (together with $ \pi(x,\lambda) = x + \lambda \cdotp a $) for $ \conv\big(P' \cup
    \varphi_H(P')\big) $.
    Since $ 0 \leq \lambda \leq \beta - \langle a,x \rangle $ implies $ \langle a,x \rangle \leq \beta $, we obtain
    \[
        Q = \{ (x,\lambda) : Ax \leq b, \, 0 \leq \lambda \leq \beta - \langle a,x \rangle \}
    \]
    and hence $ Q $ has at most $ (k - 1) + 2 = k + 1 $ facets.
\end{proof}
\subsection{Down-Monotonicity}
The third construction that we will use in Section~\ref{sec:computations} is related to the concept of down-monotone
polyhedra.
Here, we are only interested in down-monotonicity with respect to only one coordinate.
Let us consider the map $ \down_j \colon \R^p \to \R^p $ defined via
\[
    \down_j(x)_i := \begin{cases}
        0 & \text{if } j = k \\
        x_i & \text{else},
    \end{cases}
\]
where $ j \in [p] $.
For a polyhedron $ P' \subseteq \R^p_+ $, it is straightforward to see that
\[
    \conv \big(P' \cup \down_j(P')\big) =
    \big\{ z \in \R^p : x \in P', \, 0 \leq z_j \leq x_j, \, z_i = x_i \ \forall \, i \in [p] \setminus \{ j \} \big\}
\]
holds, which immediatly yields the bound
\[
    \xc \Big( \conv \big(P' \cup \down_j(P') \big) \Big) \leq \xc(P') + 2.
\]
In terms of nice $ 0/1 $-extensions we need an additional requirement on $ P' $:
\begin{theorem}
    \label{thm:downward}
    Let $ P' $ be a $ 0/1 $-polytope such that $ \down_j $ is injective on the vertices of $ P' $. Then
    \[
        \xcs \Big( \conv \big(P' \cup \down_j(P') \big) \Big) \leq \xcs(P') + 2.
    \]
\end{theorem}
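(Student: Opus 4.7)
The approach is to imitate the proof of Theorem~\ref{thm:reflectionsxcs} almost verbatim. Let $(Q',\pi')$ be a nice $0/1$-extension of $P'$ of size $\xcs(P')$ and set
\[
    Q := \{(y,\lambda) : y \in Q', \ 0 \leq \lambda \leq \pi'(y)_j\}, \qquad \pi(y,\lambda) := \pi'(y) - \lambda \cdotp \unitvec_j.
\]
The identity displayed just before the theorem shows that $(Q,\pi)$ is an extension of $P := \conv(P' \cup \down_j(P'))$, and $Q$ is described by at most $\xcs(P') + 2$ inequalities (those of $Q'$ together with $\lambda \geq 0$ and $\lambda \leq \pi'(y)_j$). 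It remains only to verify the three niceness conditions of Definition~\ref{def:nice01}.

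I would handle conditions (\ref{enum:zeroone}) and (\ref{enum:vertexpreserving}) by the same two-step argument as in Theorem~\ref{thm:reflectionsxcs}. Given a vertex $(y,\lambda)$ of $Q$, a perturbation $\lambda \mapsto \lambda \pm \varepsilon$ forces $\lambda \in \{0, \pi'(y)_j\}$, and a lifting argument (decompose $y = \mu y^1 + (1-\mu)y^2$ in $Q'$ and lift each $y^i$ to $Q$ via either $\lambda^i := 0$ or $\lambda^i := \pi'(y^i)_j$, depending on which of the two bounds is active) forces $y$ to be a vertex of $Q'$. Niceness of $(Q',\pi')$ then gives $y \in \{0,1\}^p$ and $v := \pi'(y)$ a vertex of $P'$, so $v_j \in \{0,1\}$, which pins down $\lambda \in \{0,1\}$ and makes $(y,\lambda)$ a $0/1$-vector. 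Moreover, $\pi(y,\lambda) \in \{v, \down_j(v)\}$, and both are $0/1$ points of the $0/1$-polytope $P = \conv(P' \cup \down_j(P'))$, hence vertices of $P$.

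The decisive step, and the only place the injectivity hypothesis enters, is (\ref{enum:vertexbijective}). Fix a vertex $v^*$ of $P$. The preceding step implies that any vertex preimage $(y,\lambda)$ of $v^*$ falls into one of two cases: either $\lambda = 0$ with $\pi'(y) = v^*$ (so $v^*$ is a vertex of $P'$), or $\lambda = 1$ with $\pi'(y)$ a vertex of $P'$ whose $j$th coordinate equals $1$ and whose $\down_j$-image equals $v^*$. If both cases produced a preimage, then $v^*$ itself and a second, distinct vertex of $P'$ would share the same image under $\down_j$, contradicting the hypothesis. Conversely, at least one case applies because every vertex of $P$ lies in the vertex sets of $P'$ or of $\down_j(P')$, and within each case $y$ is uniquely determined by niceness of $(Q',\pi')$. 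This yields exactly one preimage and finishes the proof. The main obstacle is precisely this bijectivity step; the other conditions are a mechanical adaptation of the reflection argument, whereas ruling out the coexistence of the two preimage cases is exactly what the injectivity hypothesis is tailored for.
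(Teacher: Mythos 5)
Your proof is correct and follows essentially the same route as the paper's: you use the same lift $Q$, a projection map that differs from the paper's $\pi(y,\lambda)=\down_j(\pi'(y))+\unitvec_j\cdotp\lambda$ only by the affine reparametrization $\lambda\mapsto\pi'(y)_j-\lambda$, and the same adaptation of the reflection argument for the niceness conditions. Your explicit case analysis for the bijectivity step, showing that the injectivity of $\down_j$ on the vertices of $P'$ rules out the coexistence of the two preimage types, is precisely the detail the paper leaves to the reader.
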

\begin{proof}
    Let $ (Q',\pi') $ be a nice $ 0/1 $-extension of $ P' $ and consider the polytope
    \[
        Q := \{ (y,\lambda) : y \in Q', \, 0 \leq \lambda \leq \pi'(y)_j \}
    \]
    together with the linear map
    \[
        \pi(y,\lambda) := \down_j(\pi'(y)) + \unitvec_j \cdotp \lambda,
    \]
    where $ \unitvec_j $ is the $ j $th unit vector.
    As mentioned above, $ (Q,\pi) $ is an extension for $ P := \conv (P' \cup \down_j(P') ) $.
    Let $ (y,\lambda) $ be a vertex of $ Q $.
    Analogous to the proof of Theorem~\ref{thm:reflectionsxcs}, it is easy to see that $ y $ is a vertex of $ Q' $ and $
    \lambda \in \{0,\pi'(y)_j\} \subseteq \{0,1\} $.
    This directly implies that $ Q $ is a $ 0/1 $-polytope and that every vertex of $ Q $ is projected onto a vertex of
    $ P $.
    Finally, the uniqueness of preimages of vertices of $ P $ follows from the fact that $ \down_j $ is injective on the
    vertices of $ P' $ and that $ (Q',\pi') $ be a nice $ 0/1 $-extension of $ P' $.
\end{proof}
 
\section{Combinatorial Lower Bounds}
\label{sec:lowerbounds}

\noindent
In this section, we review known combinatorial lower bounds on the extension complexities of polytopes.
We present a slight refinement of the rectangle covering bound, for which, in the case of $ 4 $-dimensional $ 0/1
$-polytopes, the resulting bound (i) is still easy to compute and (ii) provides tight results.
Before, let us consider the following very simple bounds on the extension complexity:
\begin{proposition}
    \label{prop:triviallowerbounds}
    For any polytope $ P $ with $ \dim(P) = d $ it holds that
    \begin{itemize}
        \item[a)] $ \xc(P) \geq d + 1 $,
        \item[b)] $ \xc(P) = d + 1 $ if and only if $ P $ is a simplex,
        \item[c)] $ \xc(P) = d + 2 $ if and only if $ \min\{\numfacets(P),\numverts(P)\} = d + 2 $.
    \end{itemize}
\end{proposition}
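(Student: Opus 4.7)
The plan is to prove the three parts in order, using Lemma~\ref{lem:polytope} to reduce to the case where the extension $Q$ is itself a polytope. Once $Q$ is a polytope, the classical fact that a $k$-dimensional polytope has at least $k+1$ facets (with equality exactly for simplices) does most of the heavy lifting.

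For part (a), I would pick a minimum size extension $(Q,\pi)$ with $Q$ a polytope, as furnished by Lemma~\ref{lem:polytope}. Since $\pi$ is linear and $\pi(Q) = P$, we have $\dim(Q) \geq \dim(P) = d$. Hence $\xc(P) = \numfacets(Q) \geq \dim(Q) + 1 \geq d+1$.

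For part (b), the $\Leftarrow$ direction is immediate: if $P$ is a $d$-simplex, then taking $Q = P$ and $\pi$ the identity gives an extension with $d+1$ facets, so $\xc(P) \leq d+1$; combined with (a), equality holds. For $\Rightarrow$, suppose $\xc(P) = d+1$ and choose a polytope extension $(Q,\pi)$ of this size. If $\dim(Q) \geq d+1$, then $\numfacets(Q) \geq d+2$, a contradiction. Hence $\dim(Q) = d$ and $Q$ is a $d$-polytope with exactly $d+1$ facets, i.e.\ a $d$-simplex. Since $\pi$ maps the $d$-dimensional affine hull of $Q$ onto the $d$-dimensional affine hull of $P$, the restriction $\pi|_{\mathrm{aff}(Q)}$ is an affine isomorphism, so $P$ is affinely isomorphic to $Q$, a simplex.

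For part (c), the $\Leftarrow$ direction combines the trivial bounds of Proposition~\ref{prop:trivialupperbounds} (or their analogues for general polytopes) giving $\xc(P) \leq d+2$ with parts (a) and (b): if $\min\{\numfacets(P),\numverts(P)\} = d+2$, then $P$ is not a simplex, so by (b) we have $\xc(P) \geq d+2$. For $\Rightarrow$, pick a polytope extension $(Q,\pi)$ with $\numfacets(Q) = d+2$. As above $\dim(Q) \geq d$, and the inequality $\numfacets(Q) \geq \dim(Q)+1$ forces $\dim(Q) \in \{d, d+1\}$. If $\dim(Q) = d$, the same affine-isomorphism argument as in (b) shows that $P$ is affinely isomorphic to $Q$, so $\numfacets(P) = d+2$. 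If $\dim(Q) = d+1$, then $Q$ is a $(d+1)$-simplex and hence has exactly $d+2$ vertices, so $P = \pi(Q)$ has at most $d+2$ vertices; since $\xc(P) = d+2 > d+1$, part (b) forbids $P$ from being a simplex, so $\numverts(P) \geq d+2$ and therefore $\numverts(P) = d+2$.

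The only step that requires a little care is the affine-isomorphism argument in (b)$\Rightarrow$ and case $\dim(Q) = d$ of (c)$\Rightarrow$; everything else is bookkeeping around Lemma~\ref{lem:polytope} and the elementary bound $\numfacets \geq \dim + 1$. I would write the affine-isomorphism step once and cite it in both places.
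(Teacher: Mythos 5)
Your proposal is correct and follows essentially the same route as the paper: reduce to a polytope extension via Lemma~\ref{lem:polytope}, use the bound $\numfacets(Q)\geq\dim(Q)+1$ (with equality only for simplices), split on $\dim(Q)\in\{d,d+1\}$, and in the latter case count the $d+2$ vertices of the $(d+1)$-simplex against $\numverts(P)$. The paper merely leaves parts a) and b) to the reader and phrases the only-if part of c) as a proof by contradiction, whereas you spell out the same ingredients directly.
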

\begin{proof}
    Let $ (Q,\pi) $ be a minimum size extension for $ P $.
    By Lemma~\ref{lem:polytope}, we may assume that $ Q $ is a polytope.
    Parts a) and b) are left to the reader.
    By Proposition~\ref{prop:trivialupperbounds} it remains to show the only-if part of c).

    Suppose that $ \numfacets(P),\numverts(P) \geq d + 3 $ and let us assume that $ Q $ has at most $ d + 2 $ facets.
    Note that $ \dim(Q) \geq d + 1 $ since otherwise $ Q $ is isomorphic to $ P $ and hence has at least $ d + 3 $
    facets.
    This implies that $ Q $ is a $ (d+1) $-simplex and hence has only $ d+2 $ vertices.
    Since $ Q $ must have at least as many vertices as $ P $, we obtain a contradiction.
\end{proof}
\noindent
It turns out that, together with constructions from Section~\ref{sec:upperbounds}, the above bounds suffice to determine
the extension complexities of all $ 0/1 $-polytopes up to dimension $ 3 $ (see Section~\ref{sec:computations}).
Not surprisingly, one needs more profound bounds for tight results in dimension $ 4 $.

\subsection{Yannakakis' Theorem}
In his seminal paper~\cite{Yannakakis91}, Yannakakis gave an algebraic interpretation of the extension complexity and
laid the foundation of the developtment of bounds used in important theoretical results as in~\cite{FioriniMPTW12}
or~\cite{Rothvoss13}.

Let $ P = \{ x \in \R^p : Ax \leq b \} $ be a polytope with $ A \in \R^{m \times p} $, $ b \in \R^m $ and $
\{v^1,\dotsc,v^k\} $ the set of its vertices.
The nonnegative matrix $ S \in \R_+^{m \times k} $ defined via
\[
    S_{i,j} := b_i - \langle A_{i,*} , v^j \rangle,
\]
where $ A_{i,*} $ denotes the $ i $th row of $ A $, is called a \emph{slack matrix} of $ P $.
Further, the smallest number $ r $ such that $ S = U \cdotp V $ for two nonnegative matrices $ U \in \R_+^{m \times r}
$, $ V \in \R_+^{r \times k} $ is called the \emph{nonnegative rank} of $ S $ and denoted by $ r_+(S) $.
\begin{theorem}[Yannakakis '91]
    Let $ P $ be a polytope and $ S $ be a slack matrix of $ P $. Then $ \xc(P) = r_+(S) $.
\end{theorem}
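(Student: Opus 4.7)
The plan is to prove equality by showing both inequalities $\xc(P) \le r_+(S)$ and $r_+(S) \le \xc(P)$. The two directions come from complementary constructions: a nonnegative factorization of $S$ yields a small extension of $P$, and conversely an extension of $P$ yields a nonnegative factorization of $S$.

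For the direction $\xc(P) \le r_+(S)$, suppose $S = UV$ with $U \in \R_+^{m \times r}$ and $V \in \R_+^{r \times k}$. I would set
\[
    Q := \{ (x,y) \in \R^p \times \R^r : Ax + Uy = b, \ y \ge 0 \}
\]
and let $\pi(x,y) := x$. The containment $\pi(Q) \subseteq P$ is immediate since $Ax = b - Uy \le b$ whenever $y \ge 0$. For the other containment, any $x \in P$ can be written as $x = \sum_{j=1}^k \lambda_j v^j$ with $\lambda \in \Delta_k$; then $y := V\lambda \ge 0$ satisfies $Uy = UV\lambda = S\lambda = b - Ax$, so $(x,y) \in Q$. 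Since $Q$ is cut out by $r$ inequalities plus equalities, $\xc(P) \le r$.

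For the reverse direction, let $(Q,\pi)$ be a minimum size extension of $P$. Using Lemma~\ref{lem:polytope} I would assume $Q$ is a polytope, write it as $Q = \{y : Cy \le d\}$ with $C$ having $r$ rows, and write the linear projection as $\pi(y) = Ty$. For each vertex $v^j$ of $P$ fix a preimage $y^j \in Q$. For each row $i$ of the slack matrix, the inequality $(A_{i,*} T)\, y \le b_i$ is valid on $Q$, and LP duality (applied to the maximum of $A_{i,*} T y$ over $Q$) produces a dual optimal vector $u^{(i)} \in \R_+^r$ with $(u^{(i)})^\top C = A_{i,*} T$ and $(u^{(i)})^\top d = b_i$. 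Setting $U_{i,\ell} := u^{(i)}_\ell$ and $V_{\ell,j} := d_\ell - (Cy^j)_\ell \ge 0$, one computes
\[
    (UV)_{i,j} = (u^{(i)})^\top (d - Cy^j) = b_i - (A_{i,*} T) y^j = b_i - \langle A_{i,*}, v^j\rangle = S_{i,j},
\]
yielding $r_+(S) \le r = \xc(P)$.

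The main obstacle I would expect is the step that secures equality, rather than just $\le$, in the dual bound $(u^{(i)})^\top d = b_i$. For a facet-description slack matrix this is automatic, because each facet of $P$ is tight on some vertex $v^j$ that has a preimage $y^j \in Q$, forcing $\max_{y \in Q}(A_{i,*}T)y = b_i$. If one allows $S$ to come from a non-tight halfspace description, one would instead have $(u^{(i)})^\top d = b_i^* \le b_i$, and the constant slack $b_i - b_i^*$ would have to be absorbed by an extra rank-one nonnegative term or by first reducing to a tight description; either adjustment is routine but is the one place where the argument is not purely mechanical.
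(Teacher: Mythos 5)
The paper does not prove this statement: it is imported verbatim from Yannakakis~\cite{Yannakakis91} and used as a black box, so there is no internal proof to compare against. Your argument is the standard proof of the factorization theorem, and both directions are essentially correct. For $\xc(P)\le r_+(S)$, your polyhedron $Q=\{(x,y): Ax+Uy=b,\ y\ge 0\}$ works exactly as written (the identity $S\lambda=b-Ax$ uses $\sum_j\lambda_j=1$, which you have, and $Q$ contributes at most $r$ facets since equations contribute none). For $r_+(S)\le\xc(P)$, the LP-duality construction is the right one, and for the facet--vertex slack matrix --- the only kind the paper ever computes via \polymake{} --- every row is supporting, so $\max_{y\in Q}(A_{i,*}T)y=b_i$ and your factorization goes through verbatim. (One small gloss: if $Q$ is not full-dimensional its minimal description also contains equations; these carry free dual multipliers and zero slack, so they do not enter $V$ and the inner dimension stays at the number of facets.)

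The one genuine soft spot is the repair you sketch for non-tight rows, and it matters because the paper's definition of a slack matrix allows an arbitrary description $P=\{x:Ax\le b\}$. Absorbing the constant $b_i-b_i^*$ by ``an extra rank-one nonnegative term'' only yields $r_+(S)\le\xc(P)+1$, which does not prove the stated equality, and replacing $b$ by the tight right-hand sides changes $S$ itself, so it proves the theorem for a different matrix. The correct fix keeps the inner dimension at $r$: if $\dim(P)\ge 1$, then by Lemma~\ref{lem:polytope} one may take $Q$ to be a polytope with $\dim(Q)\ge 1$, and since $\rec(Q)=\{y:Cy\le 0\}$ is trivial, there exists $w\ge 0$ with $w^\top C=\zerovec^\top$ and $w^\top d=1$; hence $w^\top(d-Cy^j)=1$ for every $j$, and replacing $u^{(i)}$ by $u^{(i)}+(b_i-b_i^*)w$ absorbs the constant into the existing $r$ factors. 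Some hypothesis of this kind is genuinely needed: for $P=\{0\}\subseteq\R$ described by $x\le 1$, $-x\le 0$, $x\le 0$ one gets $r_+(S)=1>0=\xc(P)$, so the theorem as literally stated fails for $0$-dimensional polytopes with non-supporting rows. None of this affects the paper, which only uses facet descriptions of polytopes of positive dimension.
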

\noindent
The nonnegative rank of a matrix $ S $ can also be seen as the smallest $ r $ such that $ S $ can be written as the sum
of $ r $ nonnegative rank-1 matrices, which is a helpful interpretation to obtain combinatorial bounds.
\subsection{Rectangle Coverings and Fooling Sets}
A well-known bound on the nonnegative rank is the \emph{rectangle covering bound}, which we will present here.
For more details, in particular related to its application in the field of extended formulations, we refer to the paper
of Fiorini et al.~\cite{FioriniKPT11}.

Let $ S $ be a slack matrix whose rows and columns are indexed by some sets $ \mathcal{I} $ and $ \mathcal{J} $,
respectively.
Let us define the \emph{support} of $ S $ as the set $ \supp(S) := \{ (i,j) \in \mathcal{I} \times \mathcal{J} :
S_{i,j} > 0 \} $.
A set $ I \times J $ with $ I \subseteq \mathcal{I} $, $ J \subseteq \mathcal{J} $ is now called a \emph{rectangle}, if
$ I \times J \subseteq \supp(S) $.
Further, a set of rectangles $ R_1,\dotsc,R_k $ is called a \emph{rectangle covering} if $ \supp(S) = \cup_{\ell=1}^k
R_{\ell} $.
The following observation motivates to consider the so-called \emph{rectangle covering number} of $ S $, which is
denoted by $ \rc(S) $ and defined as the smallest number of rectangles in any rectangle covering of $ S $.
Suppose that the nonnegative rank of $ S $ is $ r $, i.e., there exist nonnegative matrices $
\mathcal{R}^1,\dotsc,\mathcal{R}^r $ such that $ S = \sum_{\ell=1}^r \mathcal{R}^{\ell} $.
Then the sets $ R_{\ell} := \supp(\mathcal{R}_\ell) $ are clearly rectangles.
Moreover, one has that
\[
    \supp(S) = \supp(\cup_{\ell=1}^r \mathcal{R}_{\ell}) = \cup_{\ell=1}^r \supp(\mathcal{R}_\ell)
    = \cup_{\ell=1}^r R_{\ell},
\]
and hence the $ R_{\ell} $'s form a rectangle covering of $ S $.
Thus, if $ P $ is a polytope and $ S $ a slack matrix of $ P $, we obtain the rectangle covering bound
\[
    \rc(S) \leq r_+(S) = \xc(P).
\]
Since it still seems to be a difficult task to determine (or compute) $ \rc(S) $, one is of course interested in further
lower bounds that are easier to compute.
One simple bound on the rectangle covering number is the \emph{fooling set bound}, where a \emph{fooling set} is a set $
F \subseteq \supp(S) $ such that
\[
    S_{i_1,j_2} = 0 \text{ or } S_{i_2,j_1} = 0
\]
holds for all distinct pairs $ (i_1,j_1), (i_2,j_2) \in F $.
In what follows, the largest cardinality of a fooling set of $ S $ is called the \emph{fooling set number} and will be
denoted by $ \omega(S) $.
It is easy to see that any rectangle of $ S $ can contain at most one element of $ F $.
Thus, any rectangle covering of $ S $ consists of at least $ |F| $ rectangles and hence we obtain
\[
    \omega(S) \leq \rc(S).
\]
In summary:
\begin{proposition}
    Let $ P $ be a polytope and $ S $ a slack matrix of $ P $. Then
    \[
        \omega(S) \leq \rc(S) \leq r_+(S) = \xc(P).
    \]
\end{proposition}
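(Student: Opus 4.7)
The plan is to assemble a short proof by collecting the three ingredients that have already been essentially spelled out in the preceding discussion: the fooling set inequality, the rectangle covering inequality coming from a nonnegative factorization, and Yannakakis' theorem. None of the three pieces requires new machinery; the proposition is a summary statement, so the task is to verify each inequality carefully.

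For $\omega(S) \leq \rc(S)$, I would fix a fooling set $F \subseteq \supp(S)$ of maximum cardinality and any rectangle covering $R_1,\dotsc,R_k$ of $\supp(S)$. The key sub-step is the claim that every $R_\ell = I \times J$ contains at most one element of $F$. Indeed, if distinct $(i_1,j_1),(i_2,j_2) \in F$ both belonged to $I \times J$, then $(i_1,j_2)$ and $(i_2,j_1)$ would both lie in $I \times J \subseteq \supp(S)$, so $S_{i_1,j_2} > 0$ and $S_{i_2,j_1} > 0$, contradicting the defining property of a fooling set. Hence $|F| \leq k$, and taking the minimum over all rectangle coverings yields $\omega(S) \leq \rc(S)$.

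For $\rc(S) \leq r_+(S)$, I would reproduce the argument already given in the text: take a minimum nonnegative factorization $S = \sum_{\ell=1}^{r_+(S)} \mathcal{R}^{\ell}$ into nonnegative rank-$1$ matrices $\mathcal{R}^{\ell} = u^{\ell}(v^{\ell})^{\top}$ with $u^{\ell},v^{\ell} \geq \zerovec$. Each support $R_\ell := \supp(\mathcal{R}^{\ell}) = \supp(u^{\ell}) \times \supp(v^{\ell})$ is a combinatorial rectangle, and since the summands are nonnegative, $\supp(S) = \bigcup_{\ell=1}^{r_+(S)} R_\ell$, providing a rectangle covering of the desired size.

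Finally, the equality $r_+(S) = \xc(P)$ is exactly Yannakakis' theorem, stated immediately above, so no further argument is needed. There is no real obstacle here; the only point where one has to be slightly careful is the fooling set step, since the defining disjunction $S_{i_1,j_2}=0$ or $S_{i_2,j_1}=0$ must be applied in the contrapositive direction to conclude that a rectangle cannot contain two fooling set elements.
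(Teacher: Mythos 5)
Your proposal is correct and follows exactly the route the paper takes: the proposition is a summary of the preceding discussion, and you assemble the same three ingredients (rectangles contain at most one fooling-set element, supports of the rank-one summands form a covering, and Yannakakis' theorem). The only difference is that you spell out the contrapositive argument for the fooling-set step, which the paper leaves as ``easy to see.''
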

\noindent
While it is not much known about the general performance of the rectangle covering bound, there are some definite
limitations.
For instance, it is a classical fact that $ \omega(S) \leq (\dim(P) + 1)^2 $, see, e.g., \cite{FioriniKPT11}.
Of course, in the case of $ 0/1 $-polytopes of dimension $ 4 $, this limitation is trivial since such polytopes have at
most $ 16 $ vertices and hence an extension complexity of at most $ 16 $.
In fact, it turns out that the fooling set bound already yields tight bounds in many of our computations.
\subsection{Refinined Rectangle Coverings}
Although the classical lower bounds perform surprisingly well on slack matrices of $ 0/1 $-polytopes of dimension $ 4 $,
there are still some polytopes for which there is a gap between the rectangle covering number and the extension
complexity.
One drawback of the rectangle covering number is that it only depends on the sparsity pattern of the considered matrix.
In general, a rectangle covering $ R_1,\dotsc,R_k $ of a nonnegative matrix $ S $ might be far away from being induced
by nonnegative rank-1 matrices $ \mathcal{R}_1,\dotsc,\mathcal{R}_k $ such that $ S = \sum_{\ell=1}^k \mathcal{R}_{\ell}
$.
For instance, the matrix
\[
    \begin{pmatrix} 2 & 1 \\ 1 & 1 \end{pmatrix}
\]
has rectangle covering number $ 1 $ while its nonnegative rank is $ 2 $.
Thus, in order to assure that a rectangle covering is at least locally (by only considering $ 2 \times 2 $-submatrices)
extendable to a rank-1 decomposition, we propose the following additional requirement:
\begin{definition}
    \label{def:refinedreccover}
    A rectangle covering $ R_1,\dotsc,R_k $ of a nonnegative matrix $ S $ is called a \emph{refined covering} if
    \[
        \Big| \Big\{ \ell : R_\ell \cap \{ (i_1,j_1), (i_2,j_2) \} \neq \emptyset \Big\} \Big| \geq 2
    \]
    holds for all pairs $ (i_1,j_1), (i_2,j_2) $ with $ S_{i_1,j_1} \cdot S_{i_2,j_2} > S_{i_1,j_2} \cdot S_{i_1,j_2} $.
    The smallest size of any refined covering of $ S $ is called the \emph{refined rectangle covering number} and
    denoted by $ \rrc(S) $.
\end{definition}
\noindent
It turns out that this quantity allows us to close all remaining gaps in our computations in
Section~\ref{sec:computations}.
\begin{theorem}
    Let $ S $ be a nonnegative matrix. Then
    \[
        \rc(S) \leq \rrc(S) \leq r_+(S).
    \]
\end{theorem}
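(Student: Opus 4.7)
The plan is to handle the two inequalities separately. The first, $\rc(S) \le \rrc(S)$, is essentially a matter of unpacking definitions: every refined covering is in particular a rectangle covering, so minimizing over the (smaller) class of refined coverings can only give a larger value. I would dispatch this in one sentence.

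For the second inequality $\rrc(S) \le r_+(S)$, I would mimic the construction already sketched in the paragraphs preceding the theorem. Let $r = r_+(S)$ and fix a decomposition $S = \sum_{\ell=1}^{r} \mathcal{R}^\ell$ into nonnegative rank-$1$ matrices, and set $R_\ell := \supp(\mathcal{R}^\ell)$. The excerpt already verifies that $R_1,\dots,R_r$ is a rectangle covering of $S$. What remains is to check the extra condition in Definition~\ref{def:refinedreccover}: for every pair of entries $(i_1,j_1), (i_2,j_2)$ with $S_{i_1,j_1}\, S_{i_2,j_2} > S_{i_1,j_2}\, S_{i_2,j_1}$, at least two of the $R_\ell$'s meet $\{(i_1,j_1),(i_2,j_2)\}$.

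The core step, and the one that does the real work, is the following contradiction argument. Assume at most one rectangle, say $R_1$, meets $\{(i_1,j_1),(i_2,j_2)\}$. Split into cases depending on how $R_1$ meets this set. If $R_1$ contains only one of the two points, say $(i_1,j_1)$, then $(i_2,j_2)$ is covered by no rectangle, hence $S_{i_2,j_2} = 0$, and the strict inequality fails immediately. If $R_1$ contains both points, then since $\mathcal{R}^1$ is nonnegative rank-$1$ we may write $\mathcal{R}^1 = u v^{T}$ with $u,v \ge 0$; because $u_{i_1}v_{j_1}$ and $u_{i_2}v_{j_2}$ are strictly positive, so are $u_{i_1}v_{j_2}$ and $u_{i_2}v_{j_1}$, and the rank-$1$ identity $u_{i_1}v_{j_1}\cdot u_{i_2}v_{j_2} = u_{i_1}v_{j_2}\cdot u_{i_2}v_{j_1}$ holds. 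Since only $\mathcal{R}^1$ contributes to entries $(i_1,j_1)$ and $(i_2,j_2)$, we have $S_{i_1,j_1} = \mathcal{R}^{1}_{i_1,j_1}$ and $S_{i_2,j_2} = \mathcal{R}^{1}_{i_2,j_2}$, whereas $S_{i_1,j_2} \ge \mathcal{R}^{1}_{i_1,j_2}$ and $S_{i_2,j_1} \ge \mathcal{R}^{1}_{i_2,j_1}$ by nonnegativity of the remaining summands. Combining, $S_{i_1,j_1}\, S_{i_2,j_2} \le S_{i_1,j_2}\, S_{i_2,j_1}$, again contradicting the strict inequality.

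The main obstacle is not technical but conceptual: one has to notice that the strict inequality $S_{i_1,j_1}\,S_{i_2,j_2} > S_{i_1,j_2}\,S_{i_2,j_1}$ (a local deviation from rank-$1$ behaviour on a $2\times 2$ minor) cannot be supported by a single rank-$1$ summand, because rank-$1$ nonnegative matrices have rectangular supports and zero $2\times 2$ minors. Once this is seen, the case analysis above closes the argument, and the two inequalities together yield the claimed chain.
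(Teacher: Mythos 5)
Your proof is correct and follows essentially the same route as the paper: both take the supports of the nonnegative rank-$1$ summands as the covering and derive a contradiction from the rank-$1$ identity on the $2\times2$ minor together with the nonnegativity of the remaining summands. Your explicit case split (one point versus both points in the single covering rectangle) is subsumed in the paper's version, which observes that $\mathcal{R}_{i_1,j_1}=S_{i_1,j_1}$ and $\mathcal{R}_{i_2,j_2}=S_{i_2,j_2}$ hold regardless of whether those entries actually lie in the support.
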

\begin{proof}
    Suppose that there exist nonnegative rank-1 matrices $ \mathcal{R}_1,\dotsc,\mathcal{R}_k $ such that
    \begin{equation}
        \label{eq:proofrefinedsum}
        S = \sum_{i=1}^k \mathcal{R}_i.
    \end{equation}
    We have already seen that $ \supp(\mathcal{R}_1), \dotsc, \supp(\mathcal{R})_k $ is a rectangle covering of $ S $.
    It suffices to show that this covering also satisfies the requirements of Definition~\ref{def:refinedreccover}.
    Let us assume the contrary.
    By reordering the rows and columns of $ S $, we may assume that
    \begin{equation}
        \label{eq:proofrefineddeterminant}
        S_{1,1} \cdotp S_{2,2} > S_{1,2} \cdotp S_{2,1}
    \end{equation}
    and let $ j \in [k] $ such that $ \mathcal{R} = \mathcal{R}_j $ is the only matrix among the $ \mathcal{R}_i $'s
    whose support has a non-empty intersection with $ \{ (1,1), (2,2) \} $.
    By equation~\eqref{eq:proofrefinedsum}, this implies that
    \begin{equation}
        \label{eq:proofrefinedequal}
        \mathcal{R}_{1,1} = S_{1,1} \quad \text{and} \quad \mathcal{R}_{2,2} = S_{2,2}.
    \end{equation}
    Since $ \mathcal{R} $ has rank 1, we further have that
    \begin{equation}
        \label{eq:proofrefinedrank1}
        \mathcal{R}_{1,1} \cdotp \mathcal{R}_{2,2} = \mathcal{R}_{1,2} \cdotp \mathcal{R}_{2,1}.
    \end{equation}
    By the nonnegativity of all $ \mathcal{R}_i $'s, we finally obtain
    \[
        S_{1,2} \cdotp S_{2,1} \geq \mathcal{R}_{1,2} \cdotp \mathcal{R}_{2,1}
        \stackrel{\eqref{eq:proofrefinedrank1}}{=} \mathcal{R}_{1,1} \cdotp \mathcal{R}_{2,2}
        \stackrel{\eqref{eq:proofrefinedequal}}{=} S_{1,1} \cdotp S_{2,2}
        \stackrel{\eqref{eq:proofrefineddeterminant}}{>} S_{1,2} \cdotp S_{2,1},
    \]
    a contradiction.
\end{proof}
 
\section{Computation \& Results}
\label{sec:computations}
\noindent
In this section, we briefly describe our approach of computing the extension complexities of all $ 0/1 $-polytopes up to
dimension $ 4 $ and present the results.
In order to provide comprehensible results, we assign an ID to each polytope.
For this purpose, let us define the function $ b \colon \{0,1\}^n \to \{0,\dotsc,2^n-1\} $ via $ b(v) := \sum_{i=1}^n
v_i 2^{i-1} $.
For a set $ V \subseteq \{0,1\}^n $ of binary vectors, the ID of the corresponding polytope $ P = \conv(V) $ is now
defined as
\[
    \mathrm{ID}(P) := \sum_{v \in V} b(v) 2^{b(v)} \in \{0,2^{2^n}-1\}.
\]
Since the extension complexity of a polytope (as well as our notion of $ \xcs(\cdot) $) is invariant under affine
transformations, we shall divide all $ 0/1 $-polytopes in affine equivalence classes.
The representative of each equivalence class is chosen to be the polytope with smallest ID inside the class.

In order to compute the affine equivalence classes, we first enumerated all $ 0/1 $-equivalence classes as proposed
in~\cite{Aichholzer00}.
Recall that two $ 0/1 $-polytopes $ P, P' \subseteq \R^n $ are $ 0/1 $-equivalent if there exists an affine isomorphism
$ f \colon \{0,1\}^n \to \{0,1\}^n $ with $ f(\{0,1\}^n) = \{0,1\}^n $ such that $ f(P) = P' $.
After computing the f-vectors of the representatives of each $ 0/1 $-equivalence class via \polymake, it remained to run a
small number of tests for affine equivalence.
See Table~\ref{tab:eqclasses} for the intermediate results.

\begin{table}
    \small
    \begin{tabular}{crrr}
        \toprule
        vertices & polytopes & $0/1$-equivalence classes & affine equiv. classes \\
        \midrule
        5 & 3008 & 17 & 1 \\
6 & 7408 & 40 & 8 \\
7 & 11280 & 54 & 17 \\
8 & 12850 & 72 & 36 \\
9 & 11440 & 56 & 40 \\
10 & 8008 & 50 & 43 \\
11 & 4368 & 27 & 26 \\
12 & 1820 & 19 & 19 \\
13 & 560 & 6 & 6 \\
14 & 120 & 4 & 4 \\
15 & 16 & 1 & 1 \\
16 & 1 & 1 & 1 \\
\midrule
$\sum$ & 60879 & 347 & 202 \\ 
        \bottomrule
    \end{tabular}
    \caption{Number of $ 4 $-dimensional $ 0/1 $-polytopes}
    \label{tab:eqclasses}
\end{table}

Before we restrict ourselves to $ 4 $-dimensional $ 0/1 $-polytopes, let us show that no further (computer aided)
computations are needed to determine the extension complexities of $ 0/1 $-polytopes of dimension up to $ 3 $.

\subsection{Computations Up to Dimension 3}
With respect to affine equivalence, there are $ 12 $ different $ 0/1 $-polytopes of dimension $ 3 $ or less, see
Figure~\ref{fig:3dim}.
Applying Proposition~\ref{prop:triviallowerbounds} already yields that in each of the cases, except for the \emph{sliced
cube} (ID $ 127 $), the trivial extensions of Proposition~\ref{prop:trivialupperbounds} are smallest possible.
Considering the sliced cube, observe that it can be written as the convex hull of the union of the prism and a single
point.
Since the prism has $ 5 $ facets, we obtain an extension with only $ 6 $ facets by Corollary~\ref{cor:simplebalas}.
Again by Proposition~\ref{prop:triviallowerbounds}, this has smallest possible size.
Note that all minimum size extensions used here are nice $ 0/1 $-extensions.
A summary of the results can be found in Table~\ref{tab:dim3}.

\begin{figure}
    \begin{center}
\begin{small}
\begin{tikzpicture}[scale=1]

\tikzstyle{thickline}=[line width=1.5pt, line join=round]
\tikzstyle{thinline}=[]
\tikzstyle{dottedline}=[dotted]

\newcommand{\dx}{0.4}
\newcommand{\dy}{0.3}

\def\drawname#1#2#3{
    \node at (#1+0.7,#2-0.25) {#3};
}

\def\drawdottedcube#1#2{
    \draw[dottedline] (#1,#2) -- ++(1,0) -- ++ (0,1) -- ++(-1,0) -- cycle;
    \draw[dottedline] (#1+\dx,#2+\dy) -- ++(1,0) -- ++ (0,1) -- ++(-1,0) -- cycle;
    \draw[dottedline] (#1,#2) -- ++(\dx,\dy);
    \draw[dottedline] (#1+1,#2) -- ++(\dx,\dy);
    \draw[dottedline] (#1,#2+1) -- ++(\dx,\dy);
    \draw[dottedline] (#1+1,#2+1) -- ++(\dx,\dy);
}

\def\drawpoint#1#2{
    \drawdottedcube{#1}{#2}
    \node[fill,circle,inner sep=0pt,minimum size=3pt] at (#1,#2) {};
    \draw[thickline] (#1,#2) -- (#1,#2);
    \drawname{#1}{#2}{point ($1$)}
}
\def\drawinterval#1#2{
    \drawdottedcube{#1}{#2}
    \draw[thickline] (#1,#2) -- +(1,0);
    \drawname{#1}{#2}{interval ($3$)}
}
\def\drawtriangle#1#2{
    \drawdottedcube{#1}{#2}
    \draw[thickline] (#1,#2) -- +(1,0) -- +(\dx,\dy) -- cycle;
    \drawname{#1}{#2}{triangle ($7$)}
}
\def\drawsquare#1#2{
    \drawdottedcube{#1}{#2}
    \draw[thickline] (#1,#2) -- ++(1,0) -- ++(\dx,\dy) -- ++(-1,0) -- cycle;
    \drawname{#1}{#2}{square ($15$)}
}

\def\drawsimplex#1#2{
    \drawdottedcube{#1}{#2}
    \draw[thinline] (#1,#2) -- +(\dx,\dy) -- +(1,0);
    \draw[thinline] (#1,#2+1) -- +(\dx,\dy-1);
    \draw[thickline] (#1,#2) -- +(1,0) -- +(0,1) -- cycle;
    \drawname{#1}{#2}{tetrahedron ($23$)}
}

\def\drawpyramid#1#2{
    \drawdottedcube{#1}{#2}
    \draw[thickline] (#1,#2) -- +(1,0) -- +(\dx+1,\dy) -- +(0,1) -- cycle;
    \draw[thickline] (#1+1,#2) -- +(-1,1);
    \draw[thinline] (#1,#2) -- ++(\dx,\dy) -- ++(1,0);
    \draw[thinline] (#1,#2+1) -- ++(\dx,\dy-1);
    \drawname{#1}{#2}{pyramid ($31$)}
}

\def\drawbipyramid#1#2{
    \drawdottedcube{#1}{#2}
    \draw[thinline] (#1,#2) -- ++(\dx+1,\dy) -- ++(-1,1);
    \draw[thickline] (#1,#2) -- +(1,0) -- +(\dx+1,\dy) -- +(1,1) -- +(\dx,\dy+1) -- cycle;
    \draw[thickline] (#1,#2) -- ++(1,1) -- ++(0,-1);
    \drawname{#1}{#2}{bipyramid ($107$)}
}

\def\drawprism#1#2{
    \drawdottedcube{#1}{#2}
    \draw[thinline] (#1,#2) -- ++(\dx,\dy) -- ++(1,0);
    \draw[thinline] (#1,#2+1) -- ++(\dx,\dy-1);
    \draw[thickline] (#1,#2) -- ++(1,0) -- ++(\dx,\dy) -- ++(-1*\dx,-1*\dy+1) -- ++(-1,0) -- cycle;
    \draw[thickline] (#1+1,#2) -- ++(0,1);
    \drawname{#1}{#2}{prism ($63$)}
}

\def\drawnameless#1#2{
    \drawdottedcube{#1}{#2}
    \draw[thinline] (#1,#2) -- ++(\dx,\dy) -- ++(1,0) -- ++(-1,1) -- ++(0,-1);
    \draw[thickline] (#1,#2) -- ++(1,0) -- ++(0,1) -- ++(-1,-1) -- ++(\dx,\dy+1) -- ++(-1*\dx+1,-1*\dy) -- ++(\dx,\dy-1)
    -- ++(-1*\dx,-1*\dy);
    \drawname{#1}{#2}{nameless ($111$)}
}

\def\drawoctahedron#1#2{
    \drawdottedcube{#1}{#2}
    \draw[thinline] (#1+\dx,#2+\dy) -- ++(1,0) -- ++(-1,1) -- cycle;
    \draw[thickline] (#1+1,#2) -- ++(0,1) -- ++(-1,0) -- ++(1,-1) -- ++(\dx,\dy) -- ++(-1*\dx,-1*\dy+1) -- ++(\dx-1,\dy)
    -- ++(-1*\dx,-1*\dy) -- ++(\dx,\dy-1) -- cycle;
    \drawname{#1}{#2}{octahedron ($126$)}
}

\def\drawslicedcube#1#2{
    \drawdottedcube{#1}{#2}
    \draw[thinline] (#1,#2) -- ++(\dx,\dy) -- ++(1,0) -- ++(-1,1) -- ++(0,-1);
    \draw[thickline] (#1,#2) -- ++(1,0) -- ++ (0,1) -- ++(-1,0) -- cycle;
    \draw[thickline] (#1+1,#2) -- +(\dx,\dy) -- +(0,1) -- +(\dx-1,\dy+1) -- +(-1,1);
    \drawname{#1}{#2}{sliced cube ($127$)}
}

\def\drawcube#1#2{
    \draw[thinline] (#1,#2) -- ++(\dx,\dy) -- ++(1,0);
    \draw[thinline] (#1+\dx,#2+\dy) -- ++(0,1);
    \draw[thickline] (#1,#2+1) -- ++(\dx,\dy) -- ++(1,0) -- ++(0,-1) -- ++(-1*\dx,-1*\dy) -- ++(-1,0) -- ++(0,1) --
    ++(1,0) -- ++(0,-1);
    \draw[thickline] (#1+1,#2+1) -- ++(\dx,\dy);
    \drawname{#1}{#2}{cube ($255$)}
}

\drawpoint{-2.5}{7.5}
\drawinterval{0}{7.5}
\drawtriangle{2.5}{7.5}
\drawsquare{5}{7.5}

\drawsimplex{-2.5}{5}
\drawpyramid{0}{5}
\drawbipyramid{2.5}{5}
\drawprism{5}{5}

\drawnameless{-2.5}{2.5}
\drawoctahedron{0}{2.5}
\drawslicedcube{2.5}{2.5}
\drawcube{5}{2.5}

\end{tikzpicture}
\end{small}
\end{center}
 
    \caption{All $ 0/1 $-polytopes up to dimension $ 3 $ (IDs in parentheses)}
    \label{fig:3dim}
\end{figure}

\begin{table}
    \small
    \begin{tabular}{lrcccc}
        \toprule
        polytope & \multicolumn{1}{c}{ID} & dimension & vertices & facets & $ \xcs $ \\
        \midrule
        point & 1 & 0 & 1 & 0 & 0 \\
        interval & 3 & 1 & 2 & 2 & 2 \\
        triangle & 7 & 2 & 3 & 3 & 3 \\
        square & 15 & 2 & 4 & 4 & 4 \\
        tetrahedron & 23 & 3 & 4 & 4 & 4 \\
        pyramid & 31 & 3 & 5 & 5 & 5 \\
        bipyramid & 107 & 3 & 5 & 6 & 5 \\
        prism & 63 & 3 & 6 & 5 & 5 \\
        nameless & 111 & 3 & 6 & 7 & 6 \\
        octahedron & 126 & 3 & 6 & 8 & 6 \\
        sliced cube & 127 & 3 & 7 & 7 & 6 \\
        cube & 255 & 3 & 8 & 6 & 6 \\
        \bottomrule
    \end{tabular}
    \caption{Extension complexities of representatives of all affine equivalence classes of $ 0/1 $-polytopes up to
    dimension $ 3 $}
    \label{tab:dim3}
\end{table}

\subsection{Computations in Dimension 4}
In Section~\ref{sec:lowerbounds}, we presented several lower bounds on the extension complexity.
Let us recall, that we have the inequality chain
\[
    \omega(S) \leq \rc(S) \leq \rrc(S) \leq r_+(S) = \xc(P) \leq \xcs(P),
\]
where $ S $ is the slack matrix of the polytope $ P $.
For each representative of all affine equivalence classes, we computed their slack matrices with \polymake{} and
calculated all required lower bounds by using simple, exact backtracking algorithms.

In order to obtain tight upper bounds on $ \xc(P) $, we first fixed all representatives for which the trivial extension
of Proposition~\ref{prop:trivialupperbounds} is already of minimum size.
Observe now that all upper bounds in Section~\ref{sec:upperbounds} are induced by extensions constructed in the
following way:
Start with a polytope $ P' $ and perform a simple geometric operation to obtain $ P $.
If the size of the resulting extension matches the largest lower bound on $ \xc(P) $, then the extension complexity of $
P $ is determined.
In that case, we call the polytope $ P' $ the \emph{predecessor polytope} of $ P $.
By iterating this process of checking for appropriate predecessors and geometric operations that yield smallest possible
extensions, we were able to determine the extension complexities of all representatives.
Note that these computations can also be performed by very simple algorithms.

The final results are presented in Table~\ref{tab:finalresults} and can be read as follows:
For each representative $ P $, we list its ID, its number of vertices ($ n $) and facets ($ m $) as well as all computed
lower bounds on $ \xc(P) $ as well as $ \xc(P) $ itself.
Since all implicitly computed extensions are even nice $ 0/1 $-extensions, we have that $ \xc(P) $ and $ \xcs(P) $
conincide in all these cases.
Further, we indicate the geometric operation used to construct the smallest extension as well as the corresponding
predecessor polytope.
For the geometric operations, we use the following symbols:
\[
\begin{array}{cl}
    \text{-} & \text{no operation, original outer description is smallest possible} \\
    \Delta & \text{trivial vertex-extension (see Proposition~\ref{prop:trivialupperbounds})} \\
    \cup & \text{union with a single point (see Corollary~\ref{cor:simplebalas})} \\
    \div & \text{reflection at a hyperplane corresponding to one of the cube's symmetries} \\
    & \text{(see Theorem~\ref{thm:reflectionsxcs})} \\
    \phantom{{}^*} \div^* & \text{reflection at a hyperplane corresponding to one of the cube's symmetries that is} \\
    & \text{also a facet of the predecessor polytope (see Corollary~\ref{cor:reflectionfacet})} \\
    \downarrow & \text{making the predecessor down-monotone with respect to one coordinate} \\
    & \text{(see \ref{thm:downward})}
\end{array}
\]
Finally, since it may be too time-consuming for the reader to compute the affine equivalence class of a given
polytope $ P $, we additionaly provide the representatives of all $ 0/1 $-equivalence classes that fall into the same
affine equivalence class as $ P $.
Thus, the reader has only to compute (the representative of) the $ 0/1 $-equivalence class of $ P $.
Note that this can be done very efficiently since any affine map of the cube is a composition of some entry flips
and coordinate swaps defined in Section~\ref{sec:reflections}, see~\cite{Aichholzer00}.

\vspace{1em}

{
    \small
    \tablefirsthead{        \toprule
        \multicolumn{1}{c}{ID} & $ n $ & $ m $ & $ \omega $ & $ \rc $ & $ \rrc $ & $ \xcs $ & extension & predecessor &
        further representatives \\
        \midrule
    }
    \tablehead{        \multicolumn{10}{l}{\textit{continued from previous page}} \\
        \midrule
        \multicolumn{1}{c}{ID} & $ n $ & $ m $ & $ \omega $ & $ \rc $ & $ \rrc $ & $ \xcs $ & extension & predecessor &
        further representatives \\
        \midrule
    }
    \tabletail{        \midrule
        \multicolumn{10}{r}{\textit{continued on next page}} \\
    }
    \tablelasttail{        \bottomrule
    }
    \bottomcaption{Extension complexities of representatives of all affine equivalence classes of $ 4 $-dimensional $
    0/1 $-polytopes}
    \label{tab:finalresults}
    \begin{supertabular}{rrrrrrrcp{2.5cm}p{3cm}}
        279 & 5 & 5 & 5 & 5 & 5 & 5 & - & - & \fontsize{0.2cm}{1em}{}\selectfont{}283, 286, 301, 361, 362, 391, 395, 406, 410, 425, 428, 488, 856, 872, 1681, 5761 \\
287 & 6 & 6 & 6 & 6 & 6 & 6 & - & - & \fontsize{0.2cm}{1em}{}\selectfont{}303, 317, 318, 366, 399, 411, 427, 429, 430, 444, 490, 858, 876, 965, 966, 980, 984, 1635, 1641, 1650, 1656, 1686, 5766 \\
363 & 6 & 7 & 6 & 6 & 6 & 6 & $ \Delta $ & - & \fontsize{0.2cm}{1em}{}\selectfont{}407, 414, 489, 1713, 1714, 1716 \\
5769 & 6 & 8 & 6 & 6 & 6 & 6 & $ \Delta $ & - & \fontsize{0.2cm}{1em}{}\selectfont{}- \\
5784 & 6 & 8 & 6 & 6 & 6 & 6 & $ \Delta $ & - & \fontsize{0.2cm}{1em}{}\selectfont{}- \\
6017 & 6 & 8 & 6 & 6 & 6 & 6 & $ \Delta $ & - & \fontsize{0.2cm}{1em}{}\selectfont{}- \\
854 & 6 & 9 & 6 & 6 & 6 & 6 & $ \Delta $ & - & \fontsize{0.2cm}{1em}{}\selectfont{}857, 874 \\
873 & 6 & 9 & 6 & 6 & 6 & 6 & $ \Delta $ & - & \fontsize{0.2cm}{1em}{}\selectfont{}1683 \\
5763 & 6 & 9 & 6 & 6 & 6 & 6 & $ \Delta $ & - & \fontsize{0.2cm}{1em}{}\selectfont{}- \\
319 & 7 & 6 & 6 & 6 & 6 & 6 & - & - & \fontsize{0.2cm}{1em}{}\selectfont{}431, 494, 829, 892, 967, 988, 1639, 1654, 1912, 5782 \\
1643 & 7 & 7 & 7 & 7 & 7 & 7 & - & - & \fontsize{0.2cm}{1em}{}\selectfont{}1718 \\
367 & 7 & 8 & 7 & 7 & 7 & 7 & $ \Delta $ & - & \fontsize{0.2cm}{1em}{}\selectfont{}415, 446, 491, 1777, 1778, 1969, 1972 \\
855 & 7 & 8 & 7 & 7 & 7 & 7 & $ \Delta $ & - & \fontsize{0.2cm}{1em}{}\selectfont{}859, 862, 878, 981, 985, 1651, 1658 \\
382 & 7 & 9 & 7 & 7 & 7 & 7 & $ \Delta $ & - & \fontsize{0.2cm}{1em}{}\selectfont{}445, 2017, 2018, 5737, 5738 \\
875 & 7 & 9 & 7 & 7 & 7 & 7 & $ \Delta $ & - & \fontsize{0.2cm}{1em}{}\selectfont{}877, 982, 1715, 1717 \\
1657 & 7 & 9 & 7 & 7 & 7 & 7 & $ \Delta $ & - & \fontsize{0.2cm}{1em}{}\selectfont{}1687, 1721 \\
5774 & 7 & 10 & 7 & 7 & 7 & 7 & $ \Delta $ & - & \fontsize{0.2cm}{1em}{}\selectfont{}- \\
5785 & 7 & 10 & 7 & 7 & 7 & 7 & $ \Delta $ & - & \fontsize{0.2cm}{1em}{}\selectfont{}- \\
5786 & 7 & 10 & 7 & 7 & 7 & 7 & $ \Delta $ & - & \fontsize{0.2cm}{1em}{}\selectfont{}- \\
6019 & 7 & 10 & 7 & 7 & 7 & 7 & $ \Delta $ & - & \fontsize{0.2cm}{1em}{}\selectfont{}- \\
6025 & 7 & 10 & 7 & 7 & 7 & 7 & $ \Delta $ & - & \fontsize{0.2cm}{1em}{}\selectfont{}- \\
5767 & 7 & 11 & 7 & 7 & 7 & 7 & $ \Delta $ & - & \fontsize{0.2cm}{1em}{}\selectfont{}- \\
5771 & 7 & 11 & 7 & 7 & 7 & 7 & $ \Delta $ & - & \fontsize{0.2cm}{1em}{}\selectfont{}- \\
5801 & 7 & 12 & 7 & 7 & 7 & 7 & $ \Delta $ & - & \fontsize{0.2cm}{1em}{}\selectfont{}- \\
5804 & 7 & 12 & 7 & 7 & 7 & 7 & $ \Delta $ & - & \fontsize{0.2cm}{1em}{}\selectfont{}6040 \\
6625 & 7 & 13 & 7 & 7 & 7 & 7 & $ \Delta $ & - & \fontsize{0.2cm}{1em}{}\selectfont{}- \\
831 & 8 & 6 & 6 & 6 & 6 & 6 & - & - & \fontsize{0.2cm}{1em}{}\selectfont{}975, 1020, 15555 \\
863 & 8 & 7 & 7 & 7 & 7 & 7 & - & - & \fontsize{0.2cm}{1em}{}\selectfont{}989, 1655, 1910, 1914 \\
383 & 8 & 8 & 7 & 7 & 7 & 7 & $ \cup $ & $ 375 \ (\cong 319) $ & \fontsize{0.2cm}{1em}{}\selectfont{}447, 495, 510, 2033, 2034, 2040 \\
893 & 8 & 8 & 7 & 7 & 7 & 7 & $ \cup $ & $ 892 \ (\cong 319) $ & \fontsize{0.2cm}{1em}{}\selectfont{}983, 1973 \\
1647 & 8 & 8 & 7 & 7 & 7 & 7 & $ \cup $ & $ 1646 \ (\cong 319) $ & \fontsize{0.2cm}{1em}{}\selectfont{}1782 \\
1723 & 8 & 8 & 7 & 7 & 7 & 7 & $ \cup $ & $ 1211 \ (\cong 319) $ & \fontsize{0.2cm}{1em}{}\selectfont{}1913 \\
879 & 8 & 9 & 7 & 7 & 7 & 7 & $ \cup $ & $ 847 \ (\cong 319) $ & \fontsize{0.2cm}{1em}{}\selectfont{}990, 1779, 1971, 1980 \\
894 & 8 & 9 & 7 & 7 & 7 & 7 & $ \cup $ & $ 892 \ (\cong 319) $ & \fontsize{0.2cm}{1em}{}\selectfont{}987, 1662, 2019, 2022, 5742 \\
1659 & 8 & 9 & 8 & 8 & 8 & 8 & $ \Delta $ & - & \fontsize{0.2cm}{1em}{}\selectfont{}1719, 1974 \\
5739 & 8 & 9 & 8 & 8 & 8 & 8 & $ \Delta $ & - & \fontsize{0.2cm}{1em}{}\selectfont{}- \\
5783 & 8 & 9 & 7 & 7 & 7 & 7 & $ \cup $ & $ 5782 \ (\cong 319) $ & \fontsize{0.2cm}{1em}{}\selectfont{}- \\
5790 & 8 & 9 & 7 & 7 & 7 & 7 & $ \cup $ & $ 5782 \ (\cong 319) $ & \fontsize{0.2cm}{1em}{}\selectfont{}- \\
6038 & 8 & 10 & 7 & 7 & 7 & 7 & $ \cup $ & $ 5910 \ (\cong 319) $ & \fontsize{0.2cm}{1em}{}\selectfont{}- \\
6041 & 8 & 10 & 7 & 7 & 7 & 7 & $ \cup $ & $ 5529 \ (\cong 319) $ & \fontsize{0.2cm}{1em}{}\selectfont{}- \\
1695 & 8 & 11 & 8 & 8 & 8 & 8 & $ \Delta $ & - & \fontsize{0.2cm}{1em}{}\selectfont{}1785 \\
1725 & 8 & 11 & 8 & 8 & 8 & 8 & $ \Delta $ & - & \fontsize{0.2cm}{1em}{}\selectfont{}2025 \\
5787 & 8 & 11 & 8 & 8 & 8 & 8 & $ \Delta $ & - & \fontsize{0.2cm}{1em}{}\selectfont{}- \\
5803 & 8 & 11 & 8 & 8 & 8 & 8 & $ \Delta $ & - & \fontsize{0.2cm}{1em}{}\selectfont{}- \\
6023 & 8 & 11 & 7 & 7 & 7 & 7 & $ \cup $ & $ 5895 \ (\cong 319) $ & \fontsize{0.2cm}{1em}{}\selectfont{}- \\
6630 & 8 & 11 & 7 & 7 & 7 & 7 & $ \cup $ & $ 6502 \ (\cong 319) $ & \fontsize{0.2cm}{1em}{}\selectfont{}- \\
5806 & 8 & 12 & 8 & 8 & 8 & 8 & $ \Delta $ & - & \fontsize{0.2cm}{1em}{}\selectfont{}6042 \\
5820 & 8 & 12 & 7 & 7 & 7 & 7 & $ \cup $ & $ 5692 \ (\cong 319) $ & \fontsize{0.2cm}{1em}{}\selectfont{}6634 \\
6027 & 8 & 12 & 8 & 8 & 8 & 8 & $ \Delta $ & - & \fontsize{0.2cm}{1em}{}\selectfont{}- \\
6641 & 8 & 12 & 8 & 8 & 8 & 8 & $ \Delta $ & - & \fontsize{0.2cm}{1em}{}\selectfont{}- \\
7140 & 8 & 12 & 8 & 8 & 8 & 8 & $ \Delta $ & - & \fontsize{0.2cm}{1em}{}\selectfont{}- \\
7905 & 8 & 12 & 7 & 7 & 7 & 7 & $ \cup $ & $ 7904 \ (\cong 319) $ & \fontsize{0.2cm}{1em}{}\selectfont{}- \\
5775 & 8 & 13 & 8 & 8 & 8 & 8 & $ \Delta $ & - & \fontsize{0.2cm}{1em}{}\selectfont{}- \\
5805 & 8 & 13 & 8 & 8 & 8 & 8 & $ \Delta $ & - & \fontsize{0.2cm}{1em}{}\selectfont{}- \\
6030 & 8 & 13 & 8 & 8 & 8 & 8 & $ \Delta $ & - & \fontsize{0.2cm}{1em}{}\selectfont{}- \\
6057 & 8 & 13 & 8 & 8 & 8 & 8 & $ \Delta $ & - & \fontsize{0.2cm}{1em}{}\selectfont{}- \\
6627 & 8 & 13 & 8 & 8 & 8 & 8 & $ \Delta $ & - & \fontsize{0.2cm}{1em}{}\selectfont{}- \\
6633 & 8 & 13 & 8 & 8 & 8 & 8 & $ \Delta $ & - & \fontsize{0.2cm}{1em}{}\selectfont{}- \\
5866 & 8 & 14 & 8 & 8 & 8 & 8 & $ \Delta $ & - & \fontsize{0.2cm}{1em}{}\selectfont{}6060, 6648 \\
5865 & 8 & 15 & 8 & 8 & 8 & 8 & $ \Delta $ & - & \fontsize{0.2cm}{1em}{}\selectfont{}- \\
6375 & 8 & 15 & 8 & 8 & 8 & 8 & $ \Delta $ & - & \fontsize{0.2cm}{1em}{}\selectfont{}- \\
6120 & 8 & 16 & 8 & 8 & 8 & 8 & $ \Delta $ & - & \fontsize{0.2cm}{1em}{}\selectfont{}7128, 27030 \\
1911 & 9 & 6 & 6 & 6 & 6 & 6 & - & - & \fontsize{0.2cm}{1em}{}\selectfont{}- \\
511 & 9 & 7 & 7 & 7 & 7 & 7 & - & - & \fontsize{0.2cm}{1em}{}\selectfont{}4081 \\
895 & 9 & 8 & 7 & 7 & 7 & 7 & $ \cup $ & $ 831 \ (\cong 831) $ & \fontsize{0.2cm}{1em}{}\selectfont{}991, 1021, 2035, 2042 \\
1915 & 9 & 8 & 7 & 8 & 8 & 8 & - & - & \fontsize{0.2cm}{1em}{}\selectfont{}1975 \\
1663 & 9 & 9 & 8 & 8 & 8 & 8 & $ \cup $ & $ 1662 \ (\cong 894) $ & \fontsize{0.2cm}{1em}{}\selectfont{}1783, 2038 \\
1918 & 9 & 9 & 7 & 8 & 8 & 8 & $ \cup $ & $ 1916 \ (\cong 863) $ & \fontsize{0.2cm}{1em}{}\selectfont{}2023, 5758 \\
5743 & 9 & 9 & 8 & 8 & 8 & 8 & $ \cup $ & $ 5742 \ (\cong 894) $ & \fontsize{0.2cm}{1em}{}\selectfont{}- \\
6039 & 9 & 9 & 7 & 7 & 7 & 7 & $ \cup $ & $ 5911 \ (\cong 831) $ & \fontsize{0.2cm}{1em}{}\selectfont{}- \\
15559 & 9 & 9 & 7 & 7 & 7 & 7 & $ \cup $ & $ 15555 \ (\cong 831) $ & \fontsize{0.2cm}{1em}{}\selectfont{}- \\
1727 & 9 & 10 & 8 & 8 & 8 & 8 & $ \cup $ & $ 1723 \ (\cong 1723) $ & \fontsize{0.2cm}{1em}{}\selectfont{}1787, 2041 \\
1981 & 9 & 10 & 8 & 8 & 8 & 8 & $ \cup $ & $ 1980 \ (\cong 879) $ & \fontsize{0.2cm}{1em}{}\selectfont{}2027 \\
6638 & 9 & 10 & 7 & 7 & 7 & 7 & $ \cup $ & $ 4590 \ (\cong 831) $ & \fontsize{0.2cm}{1em}{}\selectfont{}- \\
5791 & 9 & 11 & 8 & 8 & 8 & 8 & $ \cup $ & $ 5790 \ (\cong 5790) $ & \fontsize{0.2cm}{1em}{}\selectfont{}- \\
5822 & 9 & 11 & 8 & 8 & 8 & 8 & $ \cup $ & $ 5820 \ (\cong 5820) $ & \fontsize{0.2cm}{1em}{}\selectfont{}- \\
6043 & 9 & 11 & 8 & 8 & 8 & 8 & $ \cup $ & $ 6041 \ (\cong 6041) $ & \fontsize{0.2cm}{1em}{}\selectfont{}- \\
7921 & 9 & 11 & 8 & 8 & 8 & 8 & $ \cup $ & $ 7920 \ (\cong 383) $ & \fontsize{0.2cm}{1em}{}\selectfont{}- \\
5807 & 9 & 12 & 8 & 8 & 8 & 8 & $ \cup $ & $ 5295 \ (\cong 879) $ & \fontsize{0.2cm}{1em}{}\selectfont{}- \\
6046 & 9 & 12 & 8 & 8 & 8 & 8 & $ \cup $ & $ 6038 \ (\cong 6038) $ & \fontsize{0.2cm}{1em}{}\selectfont{}- \\
6059 & 9 & 12 & 8 & 8 & 8 & 8 & $ \cup $ & $ 6058 \ (\cong 383) $ & \fontsize{0.2cm}{1em}{}\selectfont{}- \\
6643 & 9 & 12 & 8 & 8 & 8 & 8 & $ \cup $ & $ 6579 \ (\cong 879) $ & \fontsize{0.2cm}{1em}{}\selectfont{}- \\
6649 & 9 & 12 & 8 & 8 & 8 & 8 & $ \cup $ & $ 6617 \ (\cong 383) $ & \fontsize{0.2cm}{1em}{}\selectfont{}- \\
7141 & 9 & 12 & 8 & 8 & 8 & 8 & $ \cup $ & $ 7077 \ (\cong 5790) $ & \fontsize{0.2cm}{1em}{}\selectfont{}- \\
7148 & 9 & 12 & 8 & 8 & 8 & 8 & $ \cup $ & $ 7116 \ (\cong 383) $ & \fontsize{0.2cm}{1em}{}\selectfont{}- \\
7907 & 9 & 12 & 8 & 8 & 8 & 8 & $ \cup $ & $ 7906 \ (\cong 5820) $ & \fontsize{0.2cm}{1em}{}\selectfont{}- \\
5821 & 9 & 13 & 8 & 8 & 8 & 8 & $ \cup $ & $ 5820 \ (\cong 5820) $ & \fontsize{0.2cm}{1em}{}\selectfont{}- \\
5870 & 9 & 13 & 8 & 8 & 8 & 8 & $ \cup $ & $ 5862 \ (\cong 5820) $ & \fontsize{0.2cm}{1em}{}\selectfont{}6076, 6650 \\
6031 & 9 & 13 & 8 & 8 & 8 & 8 & $ \cup $ & $ 6023 \ (\cong 6023) $ & \fontsize{0.2cm}{1em}{}\selectfont{}- \\
6061 & 9 & 13 & 8 & 8 & 8 & 8 & $ \cup $ & $ 6053 \ (\cong 6041) $ & \fontsize{0.2cm}{1em}{}\selectfont{}- \\
6062 & 9 & 13 & 8 & 8 & 8 & 8 & $ \cup $ & $ 6058 \ (\cong 383) $ & \fontsize{0.2cm}{1em}{}\selectfont{}- \\
6631 & 9 & 13 & 8 & 8 & 8 & 8 & $ \cup $ & $ 6630 \ (\cong 6630) $ & \fontsize{0.2cm}{1em}{}\selectfont{}- \\
6635 & 9 & 13 & 8 & 8 & 8 & 8 & $ \cup $ & $ 6634 \ (\cong 5820) $ & \fontsize{0.2cm}{1em}{}\selectfont{}- \\
6646 & 9 & 13 & 8 & 8 & 8 & 8 & $ \cup $ & $ 6630 \ (\cong 6630) $ & \fontsize{0.2cm}{1em}{}\selectfont{}- \\
7910 & 9 & 13 & 8 & 8 & 8 & 8 & $ \cup $ & $ 7908 \ (\cong 5820) $ & \fontsize{0.2cm}{1em}{}\selectfont{}- \\
5867 & 9 & 14 & 8 & 9 & 9 & 9 & $ \Delta $ & - & \fontsize{0.2cm}{1em}{}\selectfont{}- \\
6122 & 9 & 14 & 8 & 8 & 8 & 8 & $ \cup $ & $ 6058 \ (\cong 383) $ & \fontsize{0.2cm}{1em}{}\selectfont{}7129 \\
6383 & 9 & 14 & 8 & 8 & 8 & 8 & $ \cup $ & $ 6382 \ (\cong 6023) $ & \fontsize{0.2cm}{1em}{}\selectfont{}- \\
7126 & 9 & 14 & 9 & 9 & 9 & 9 & $ \Delta $ & - & \fontsize{0.2cm}{1em}{}\selectfont{}- \\
7913 & 9 & 14 & 8 & 8 & 8 & 8 & $ \cup $ & $ 7912 \ (\cong 894) $ & \fontsize{0.2cm}{1em}{}\selectfont{}- \\
6121 & 9 & 15 & 8 & 9 & 9 & 9 & $ \Delta $ & - & \fontsize{0.2cm}{1em}{}\selectfont{}- \\
27031 & 9 & 15 & 8 & 9 & 9 & 9 & $ \Delta $ & - & \fontsize{0.2cm}{1em}{}\selectfont{}- \\
1023 & 10 & 7 & 7 & 7 & 7 & 7 & - & - & \fontsize{0.2cm}{1em}{}\selectfont{}4083 \\
1919 & 10 & 8 & 7 & 7 & 7 & 7 & $ \cup $ & $ 1911 \ (\cong 1911) $ & \fontsize{0.2cm}{1em}{}\selectfont{}2039 \\
15567 & 10 & 8 & 7 & 7 & 7 & 7 & $ \div $ & $ 5189 \ (\cong 107) $ & \fontsize{0.2cm}{1em}{}\selectfont{}- \\
1791 & 10 & 9 & 8 & 8 & 8 & 8 & $ \cup $ & $ 1279 \ (\cong 511) $ & \fontsize{0.2cm}{1em}{}\selectfont{}4086 \\
1983 & 10 & 9 & 8 & 8 & 8 & 8 & $ \cup $ & $ 1967 \ (\cong 895) $ & \fontsize{0.2cm}{1em}{}\selectfont{}2043 \\
2031 & 10 & 9 & 8 & 8 & 8 & 8 & $ \cup $ & $ 1999 \ (\cong 895) $ & \fontsize{0.2cm}{1em}{}\selectfont{}2046 \\
5759 & 10 & 9 & 8 & 9 & 9 & 9 & - & - & \fontsize{0.2cm}{1em}{}\selectfont{}- \\
6014 & 10 & 10 & 7 & 9 & 9 & 9 & $ \cup $ & $ 6012 \ (\cong 1918) $ & \fontsize{0.2cm}{1em}{}\selectfont{}- \\
8177 & 10 & 10 & 8 & 8 & 8 & 8 & $ \cup $ & $ 8176 \ (\cong 511) $ & \fontsize{0.2cm}{1em}{}\selectfont{}- \\
6047 & 10 & 11 & 8 & 8 & 8 & 8 & $ \cup $ & $ 6039 \ (\cong 6039) $ & \fontsize{0.2cm}{1em}{}\selectfont{}- \\
7150 & 10 & 11 & 8 & 8 & 8 & 8 & $ \cup $ & $ 6638 \ (\cong 6638) $ & \fontsize{0.2cm}{1em}{}\selectfont{}- \\
7923 & 10 & 11 & 8 & 8 & 8 & 8 & $ \cup $ & $ 7411 \ (\cong 6638) $ & \fontsize{0.2cm}{1em}{}\selectfont{}- \\
8178 & 10 & 11 & 8 & 8 & 8 & 8 & $ \cup $ & $ 8176 \ (\cong 511) $ & \fontsize{0.2cm}{1em}{}\selectfont{}- \\
15575 & 10 & 11 & 8 & 8 & 8 & 8 & $ \cup $ & $ 15571 \ (\cong 15559) $ & \fontsize{0.2cm}{1em}{}\selectfont{}- \\
15579 & 10 & 11 & 8 & 8 & 8 & 8 & $ \cup $ & $ 15571 \ (\cong 15559) $ & \fontsize{0.2cm}{1em}{}\selectfont{}- \\
5823 & 10 & 12 & 8 & 9 & 9 & 9 & $ \cup $ & $ 5822 \ (\cong 5822) $ & \fontsize{0.2cm}{1em}{}\selectfont{}- \\
5886 & 10 & 12 & 8 & 9 & 9 & 9 & $ \cup $ & $ 5884 \ (\cong 5870) $ & \fontsize{0.2cm}{1em}{}\selectfont{}- \\
6063 & 10 & 12 & 8 & 8 & 8 & 8 & $ \cup $ & $ 5551 \ (\cong 895) $ & \fontsize{0.2cm}{1em}{}\selectfont{}- \\
6639 & 10 & 12 & 8 & 8 & 8 & 8 & $ \cup $ & $ 6638 \ (\cong 6638) $ & \fontsize{0.2cm}{1em}{}\selectfont{}- \\
6647 & 10 & 12 & 8 & 8 & 8 & 8 & $ \cup $ & $ 6519 \ (\cong 6039) $ & \fontsize{0.2cm}{1em}{}\selectfont{}- \\
6651 & 10 & 12 & 8 & 8 & 8 & 8 & $ \cup $ & $ 6587 \ (\cong 895) $ & \fontsize{0.2cm}{1em}{}\selectfont{}- \\
6654 & 10 & 12 & 8 & 8 & 8 & 8 & $ \cup $ & $ 6638 \ (\cong 6638) $ & \fontsize{0.2cm}{1em}{}\selectfont{}- \\
7164 & 10 & 12 & 8 & 8 & 8 & 8 & $ \cup $ & $ 5116 \ (\cong 895) $ & \fontsize{0.2cm}{1em}{}\selectfont{}7930 \\
7918 & 10 & 12 & 7 & 7 & 7 & 7 & $ \cup $ & $ 3822 \ (\cong 1911) $ & \fontsize{0.2cm}{1em}{}\selectfont{}- \\
7926 & 10 & 12 & 8 & 9 & 9 & 9 & $ \cup $ & $ 7924 \ (\cong 7148) $ & \fontsize{0.2cm}{1em}{}\selectfont{}- \\
8184 & 10 & 12 & 8 & 8 & 8 & 8 & $ \cup $ & $ 8176 \ (\cong 511) $ & \fontsize{0.2cm}{1em}{}\selectfont{}- \\
15834 & 10 & 12 & 8 & 9 & 9 & 9 & $ \cup $ & $ 15832 \ (\cong 5870) $ & \fontsize{0.2cm}{1em}{}\selectfont{}- \\
5871 & 10 & 13 & 8 & 9 & 9 & 9 & $ \cup $ & $ 5870 \ (\cong 5870) $ & \fontsize{0.2cm}{1em}{}\selectfont{}- \\
6077 & 10 & 13 & 8 & 9 & 9 & 9 & $ \cup $ & $ 6076 \ (\cong 5870) $ & \fontsize{0.2cm}{1em}{}\selectfont{}- \\
6078 & 10 & 13 & 8 & 9 & 9 & 9 & $ \cup $ & $ 6076 \ (\cong 5870) $ & \fontsize{0.2cm}{1em}{}\selectfont{}- \\
6126 & 10 & 13 & 8 & 8 & 8 & 8 & $ \cup $ & $ 5614 \ (\cong 895) $ & \fontsize{0.2cm}{1em}{}\selectfont{}7131 \\
6399 & 10 & 13 & 8 & 8 & 8 & 8 & $ \cup $ & $ 4351 \ (\cong 511) $ & \fontsize{0.2cm}{1em}{}\selectfont{}- \\
7127 & 10 & 13 & 8 & 9 & 9 & 9 & $ \cup $ & $ 7125 \ (\cong 5870) $ & \fontsize{0.2cm}{1em}{}\selectfont{}- \\
7134 & 10 & 13 & 9 & 9 & 9 & 9 & $ \cup $ & $ 7132 \ (\cong 6122) $ & \fontsize{0.2cm}{1em}{}\selectfont{}- \\
7143 & 10 & 13 & 8 & 9 & 9 & 9 & $ \cup $ & $ 7142 \ (\cong 7141) $ & \fontsize{0.2cm}{1em}{}\selectfont{}- \\
7149 & 10 & 13 & 8 & 9 & 9 & 9 & $ \cup $ & $ 7148 \ (\cong 7148) $ & \fontsize{0.2cm}{1em}{}\selectfont{}- \\
7911 & 10 & 13 & 8 & 9 & 9 & 9 & $ \cup $ & $ 7910 \ (\cong 7910) $ & \fontsize{0.2cm}{1em}{}\selectfont{}- \\
7915 & 10 & 13 & 8 & 9 & 9 & 9 & $ \cup $ & $ 7914 \ (\cong 5870) $ & \fontsize{0.2cm}{1em}{}\selectfont{}- \\
7929 & 10 & 13 & 8 & 9 & 9 & 9 & $ \cup $ & $ 7928 \ (\cong 6122) $ & \fontsize{0.2cm}{1em}{}\selectfont{}- \\
15830 & 10 & 13 & 8 & 9 & 9 & 9 & $ \cup $ & $ 15828 \ (\cong 1918) $ & \fontsize{0.2cm}{1em}{}\selectfont{}- \\
6123 & 10 & 14 & 8 & 9 & 9 & 9 & $ \cup $ & $ 6122 \ (\cong 6122) $ & \fontsize{0.2cm}{1em}{}\selectfont{}- \\
27039 & 10 & 14 & 8 & 9 & 9 & 9 & $ \cup $ & $ 27037 \ (\cong 7913) $ & \fontsize{0.2cm}{1em}{}\selectfont{}- \\
27606 & 10 & 14 & 8 & 9 & 10 & 10 & $ \Delta $ & - & \fontsize{0.2cm}{1em}{}\selectfont{}- \\
2047 & 11 & 8 & 8 & 8 & 8 & 8 & - & - & \fontsize{0.2cm}{1em}{}\selectfont{}4087 \\
6015 & 11 & 9 & 8 & 8 & 8 & 8 & $ \cup $ & $ 6007 \ (\cong 1919) $ & \fontsize{0.2cm}{1em}{}\selectfont{}- \\
8179 & 11 & 10 & 8 & 8 & 8 & 8 & $ \cup $ & $ 4083 \ (\cong 1023) $ & \fontsize{0.2cm}{1em}{}\selectfont{}- \\
15583 & 11 & 10 & 8 & 8 & 8 & 8 & $ \cup $ & $ 15567 \ (\cong 15567) $ & \fontsize{0.2cm}{1em}{}\selectfont{}- \\
6655 & 11 & 11 & 8 & 8 & 8 & 8 & $ \cup $ & $ 4607 \ (\cong 1023) $ & \fontsize{0.2cm}{1em}{}\selectfont{}- \\
7934 & 11 & 11 & 8 & 8 & 8 & 8 & $ \cup $ & $ 7918 \ (\cong 7918) $ & \fontsize{0.2cm}{1em}{}\selectfont{}- \\
8186 & 11 & 11 & 8 & 8 & 8 & 8 & $ \cup $ & $ 4090 \ (\cong 1023) $ & \fontsize{0.2cm}{1em}{}\selectfont{}- \\
15853 & 11 & 11 & 8 & 9 & 9 & 9 & $ \cup $ & $ 15852 \ (\cong 7164) $ & \fontsize{0.2cm}{1em}{}\selectfont{}- \\
5887 & 11 & 12 & 8 & 9 & 9 & 9 & $ \cup $ & $ 5375 \ (\cong 1791) $ & \fontsize{0.2cm}{1em}{}\selectfont{}- \\
6079 & 11 & 12 & 8 & 8 & 8 & 8 & $ \cup $ & $ 5951 \ (\cong 1919) $ & \fontsize{0.2cm}{1em}{}\selectfont{}- \\
7135 & 11 & 12 & 8 & 8 & 8 & 8 & $ \cup $ & $ 7007 \ (\cong 1919) $ & \fontsize{0.2cm}{1em}{}\selectfont{}- \\
7151 & 11 & 12 & 8 & 9 & 9 & 9 & $ \cup $ & $ 7150 \ (\cong 7150) $ & \fontsize{0.2cm}{1em}{}\selectfont{}- \\
7165 & 11 & 12 & 8 & 9 & 9 & 9 & $ \cup $ & $ 7164 \ (\cong 7164) $ & \fontsize{0.2cm}{1em}{}\selectfont{}- \\
7919 & 11 & 12 & 8 & 8 & 8 & 8 & $ \cup $ & $ 7918 \ (\cong 7918) $ & \fontsize{0.2cm}{1em}{}\selectfont{}- \\
7927 & 11 & 12 & 8 & 9 & 9 & 9 & $ \cup $ & $ 7925 \ (\cong 7923) $ & \fontsize{0.2cm}{1em}{}\selectfont{}- \\
7931 & 11 & 12 & 8 & 9 & 9 & 9 & $ \cup $ & $ 7930 \ (\cong 7164) $ & \fontsize{0.2cm}{1em}{}\selectfont{}- \\
8182 & 11 & 12 & 8 & 9 & 9 & 9 & $ \cup $ & $ 8180 \ (\cong 8178) $ & \fontsize{0.2cm}{1em}{}\selectfont{}- \\
8185 & 11 & 12 & 8 & 9 & 9 & 9 & $ \cup $ & $ 8184 \ (\cong 8184) $ & \fontsize{0.2cm}{1em}{}\selectfont{}- \\
15831 & 11 & 12 & 8 & 9 & 9 & 9 & $ \cup $ & $ 15827 \ (\cong 15575) $ & \fontsize{0.2cm}{1em}{}\selectfont{}- \\
15835 & 11 & 12 & 8 & 9 & 9 & 9 & $ \cup $ & $ 15827 \ (\cong 15575) $ & \fontsize{0.2cm}{1em}{}\selectfont{}- \\
15838 & 11 & 12 & 8 & 9 & 9 & 9 & $ \cup $ & $ 15836 \ (\cong 6126) $ & \fontsize{0.2cm}{1em}{}\selectfont{}- \\
6127 & 11 & 13 & 8 & 9 & 9 & 9 & $ \cup $ & $ 6126 \ (\cong 6126) $ & \fontsize{0.2cm}{1em}{}\selectfont{}- \\
6142 & 11 & 13 & 8 & 9 & 9 & 9 & $ \cup $ & $ 6140 \ (\cong 6126) $ & \fontsize{0.2cm}{1em}{}\selectfont{}- \\
27071 & 11 & 13 & 8 & 9 & 10 & 10 & $ \cup $ & $ 27070 \ (\cong 27039) $ & \fontsize{0.2cm}{1em}{}\selectfont{}- \\
27581 & 11 & 13 & 8 & 10 & 10 & 10 & $ \cup $ & $ 27580 \ (\cong 15830) $ & \fontsize{0.2cm}{1em}{}\selectfont{}- \\
27607 & 11 & 13 & 8 & 9 & 10 & 10 & $ \cup $ & $ 27605 \ (\cong 7929) $ & \fontsize{0.2cm}{1em}{}\selectfont{}- \\
4095 & 12 & 7 & 7 & 7 & 7 & 7 & - & - & \fontsize{0.2cm}{1em}{}\selectfont{}- \\
15615 & 12 & 9 & 8 & 8 & 8 & 8 & $ \div $ & $ 5205 \ (\cong 111) $ & \fontsize{0.2cm}{1em}{}\selectfont{}- \\
15869 & 12 & 10 & 8 & 8 & 8 & 8 & $ \phantom{{}^*}\div^* $ & $ 12785 \ (\cong 863) $ & \fontsize{0.2cm}{1em}{}\selectfont{}- \\
16380 & 12 & 10 & 8 & 8 & 8 & 8 & $ \div $ & $ 5460 \ (\cong 126) $ & \fontsize{0.2cm}{1em}{}\selectfont{}- \\
7167 & 12 & 11 & 8 & 8 & 8 & 8 & $ \downarrow $ & $ 6604 \ (\cong 319) $ & \fontsize{0.2cm}{1em}{}\selectfont{}- \\
7935 & 12 & 11 & 8 & 9 & 9 & 9 & $ \cup $ & $ 7934 \ (\cong 7934) $ & \fontsize{0.2cm}{1em}{}\selectfont{}- \\
8183 & 12 & 11 & 8 & 9 & 9 & 9 & $ \cup $ & $ 8181 \ (\cong 8179) $ & \fontsize{0.2cm}{1em}{}\selectfont{}- \\
8187 & 12 & 11 & 8 & 9 & 9 & 9 & $ \cup $ & $ 8186 \ (\cong 8186) $ & \fontsize{0.2cm}{1em}{}\selectfont{}- \\
8190 & 12 & 11 & 8 & 9 & 9 & 9 & $ \cup $ & $ 8188 \ (\cong 8186) $ & \fontsize{0.2cm}{1em}{}\selectfont{}- \\
15839 & 12 & 11 & 8 & 9 & 9 & 9 & $ \cup $ & $ 15837 \ (\cong 7934) $ & \fontsize{0.2cm}{1em}{}\selectfont{}- \\
15855 & 12 & 11 & 8 & 9 & 9 & 9 & $ \cup $ & $ 15823 \ (\cong 15583) $ & \fontsize{0.2cm}{1em}{}\selectfont{}- \\
15870 & 12 & 11 & 8 & 9 & 9 & 9 & $ \cup $ & $ 15868 \ (\cong 8186) $ & \fontsize{0.2cm}{1em}{}\selectfont{}- \\
6143 & 12 & 12 & 8 & 9 & 9 & 9 & $ \cup $ & $ 6135 \ (\cong 6079) $ & \fontsize{0.2cm}{1em}{}\selectfont{}- \\
27135 & 12 & 12 & 8 & 9 & 9 & 9 & $ \phantom{{}^*}\div^* $ & $ 27135 \ (\cong 27135) $ & \fontsize{0.2cm}{1em}{}\selectfont{}- \\
27583 & 12 & 12 & 8 & 9 & 9 & 9 & $ \cup $ & $ 27579 \ (\cong 7919) $ & \fontsize{0.2cm}{1em}{}\selectfont{}- \\
27615 & 12 & 12 & 8 & 9 & 10 & 10 & $ \cup $ & $ 27613 \ (\cong 15838) $ & \fontsize{0.2cm}{1em}{}\selectfont{}- \\
27645 & 12 & 12 & 8 & 10 & 10 & 10 & $ \cup $ & $ 27644 \ (\cong 15838) $ & \fontsize{0.2cm}{1em}{}\selectfont{}- \\
28662 & 12 & 12 & 8 & 9 & 9 & 9 & $ \div $ & $ 19924 \ (\cong 1647) $ & \fontsize{0.2cm}{1em}{}\selectfont{}- \\
28665 & 12 & 12 & 8 & 10 & 10 & 10 & $ \cup $ & $ 28664 \ (\cong 8182) $ & \fontsize{0.2cm}{1em}{}\selectfont{}- \\
8191 & 13 & 10 & 8 & 8 & 8 & 8 & $ \cup $ & $ 4095 \ (\cong 4095) $ & \fontsize{0.2cm}{1em}{}\selectfont{}- \\
15871 & 13 & 10 & 8 & 9 & 9 & 9 & $ \div $ & $ 12787 \ (\cong 895) $ & \fontsize{0.2cm}{1em}{}\selectfont{}- \\
16381 & 13 & 10 & 8 & 9 & 9 & 9 & $ \div $ & $ 13297 \ (\cong 895) $ & \fontsize{0.2cm}{1em}{}\selectfont{}- \\
27647 & 13 & 11 & 8 & 9 & 10 & 10 & $ \cup $ & $ 27643 \ (\cong 15839) $ & \fontsize{0.2cm}{1em}{}\selectfont{}- \\
28663 & 13 & 11 & 8 & 9 & 10 & 10 & $ \cup $ & $ 28661 \ (\cong 15870) $ & \fontsize{0.2cm}{1em}{}\selectfont{}- \\
28667 & 13 & 11 & 8 & 10 & 10 & 10 & $ \cup $ & $ 28666 \ (\cong 15870) $ & \fontsize{0.2cm}{1em}{}\selectfont{}- \\
16383 & 14 & 9 & 8 & 8 & 8 & 8 & $ \div $ & $ 5461 \ (\cong 127) $ & \fontsize{0.2cm}{1em}{}\selectfont{}- \\
28671 & 14 & 10 & 8 & 9 & 9 & 9 & $ \cup $ & $ 20479 \ (\cong 8191) $ & \fontsize{0.2cm}{1em}{}\selectfont{}- \\
32511 & 14 & 10 & 8 & 10 & 10 & 10 & - & - & \fontsize{0.2cm}{1em}{}\selectfont{}- \\
32766 & 14 & 10 & 8 & 9 & 10 & 10 & - & - & \fontsize{0.2cm}{1em}{}\selectfont{}- \\
32767 & 15 & 9 & 8 & 9 & 9 & 9 & - & - & \fontsize{0.2cm}{1em}{}\selectfont{}- \\
65535 & 16 & 8 & 8 & 8 & 8 & 8 & - & - & \fontsize{0.2cm}{1em}{}\selectfont{}- \\ 
    \end{supertabular}
}
 
\newpage

\section{Outlook}
\noindent
We computed the extension complexities of all $ 0/1 $-polytopes up to dimension $ 4 $ by providing minimum size
extensions and matching lower bounds.
In particular, all implicitly computed minimum size extensions were induced by simple geometric operations and satisfy
strong properties:
\begin{observation*}
    For every $ 0/1 $-polytope of dimension at most $ 4 $ there exists a minimum size extension that is even a nice $
    0/1 $-extension.
\end{observation*}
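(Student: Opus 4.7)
The plan is to verify the observation by explicit enumeration: for every affine equivalence class of $0/1$-polytopes of dimension at most $4$, exhibit a nice $0/1$-extension whose size matches a combinatorial lower bound on $\xc(P)$, thereby forcing $\xcs(P) = \xc(P)$. Since affine transformations preserve both $\xc$ and $\xcs$ (by the remark after Definition~\ref{def:nice01}), it suffices to work with the $12$ representatives in dimensions $\leq 3$ (Figure~\ref{fig:3dim}) and the $202$ representatives in dimension $4$ (Table~\ref{tab:eqclasses}).

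First I would handle the lower bound side. For each representative $P$, compute a slack matrix $S$ using \polymake{} and then apply simple backtracking to obtain $\omega(S)$, $\rc(S)$, and $\rrc(S)$. By the results of Section~\ref{sec:lowerbounds}, each of these quantities is a valid lower bound on $\xc(P)$, hence also on $\xcs(P)$.

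Next, for the upper bound side, I would produce, for each $P$, a nice $0/1$-extension of size equal to the best lower bound obtained. The available constructions are exactly those whose size analysis was carried out in Section~\ref{sec:upperbounds} with respect to $\xcs$: the trivial extensions of Proposition~\ref{prop:trivialupperbounds}, the union operation of Theorem~\ref{thm:xcsunion} and Corollary~\ref{cor:simplebalas}, the cube-symmetric reflections of Theorem~\ref{thm:reflectionsxcs} and Corollary~\ref{cor:reflectionfacet}, and the one-coordinate down-closure of Theorem~\ref{thm:downward}. The strategy is iterative: start with polytopes whose trivial extensions are already tight, and then, for each remaining $P$, search for a predecessor $P'$ (with known nice $0/1$-extension) such that applying one of the above operations yields $P$ with an extension of the desired size. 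This produces the ``extension'' and ``predecessor'' columns of Table~\ref{tab:finalresults}.

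The main obstacle is that the search for a working predecessor is not formally guided, so a priori one cannot rule out that some $P$ admits no such construction of the right size, which would leave a gap between the computed lower bound and the upper bound. Two things make this tractable here. First, Section~\ref{sec:lowerbounds} provides the sharpened bound $\rrc$, which empirically closes the remaining gaps left by $\rc$ and $\omega$ in dimension $4$. Second, the operations above already preserve niceness, so if an upper-bound construction with size equal to the lower bound is found by any combination of them, then automatically $\xcs(P) = \xc(P)$ and the extension produced is a nice $0/1$-extension. Carrying out the iterative search over the $202$ representatives in dimension $4$ (and checking the $12$ cases in lower dimensions by hand, as in Table~\ref{tab:dim3}) closes all gaps, yielding the claimed observation.
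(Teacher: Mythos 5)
Your proposal is correct and follows essentially the same route as the paper: compute the combinatorial lower bounds $\omega$, $\rc$, $\rrc$ on each representative's slack matrix, and iteratively match them with nice $0/1$-extensions built from the trivial, union, reflection, and down-closure constructions of Section~\ref{sec:upperbounds}, whose niceness-preservation then forces $\xcs(P)=\xc(P)$. This is exactly the computation summarized in Tables~\ref{tab:dim3} and~\ref{tab:finalresults}.
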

\noindent
As mentioned throughout this paper, the authors are not aware of any arguments that rule out the existence of such
extensions for general $ 0/1 $-polytopes.
On the other hand, there is no theoretical evidence that minimum size extensions with such strong properties should
always exist.
Thus, any new insights regarding such properties of minimum size extensions are of certain interest.

On the computational side, as a next natural step one might consider computations in dimension $ 5 $.
However, it seems to be a much more difficult task to achieve a complete list of results in this case.
There are $ 1{,}226{,}525 $ different $ 0/1 $-equivalence classes~\cite{Aichholzer00}.
Thus, even the determination of all affine equivalence classes is presumably very time-consuming.
Independently, due to the sizes of the corresponding slack matrices, computing all lower bounds presented in this paper
requires sophisticated algorithms.
In addition, it is questionable whether rather simple (lower and upper) bounds as presented in this paper suffice to
determine all extension complexities.

In this paper we considered the \emph{linear} extension complexity of a polytope $ P $, which can also be seen as the
smallest $ r $ such that $ P $ can be written as a linear projection of an affine slice of the nonnegative orthant $
\R^r_+ $.
A more general way of representing polytopes that recently has become of particular interest is the concept of
semidefinite extensions.
Here, the analogous quantity is the so-called \emph{semidefinite} extension complexity, which is defined as the
smallest $ r $ such that $ P $ can be written as a linear projection of an affine slice of the cone $ \mathbb{S}^r_+ $
of positive semidefinite matrices of size $ r \times r $.
Although there are already several papers concerning this quantity, the field lacks of good upper and lower bounds on
sizes of semidefinite extensions.
For instance, Gouveia~et~al.~\cite{GouveiaRT13} examine properties of polytopes $ P $ whose semidefinite extension
complexity equals $ \dim(P) + 1 $ (which is a general lower bound~\cite{LeeT12}).
Using their observations together with our results in Table~\ref{tab:dim3} and the basic construction of semidefinite
extensions by taking the positive Hadamard square root of a slack matrix (see~\cite{GouveiaRT13}), it is easy to
determine the
semidefinite extension complexities of all $ 0/1 $-polytopes up to dimension $ 3 $.
However, in order to continue the computations for dimension $ 4 $, it seems that new -- yet unknown -- bounds have to
be taken into account.
We hope that our computations provide a helpful basis towards this task.

\bibliographystyle{plain}
\bibliography{references}

\appendix
\section*{Appendix}
\renewcommand{\thesection}{A}

\subsection{Proof of Lemma~\ref{lem:polytope}}
\label{proof:polytope}
Let $ (Q,\pi) $ be an extension for $ P \neq \emptyset $ such that $ Q \subseteq \R^q $ has smallest possible dimension
and let us write $ Q = \conv(V) + \rec(Q) $ with $ \emptyset \neq V \subseteq \R^q $ finite.
Suppose for the sake of contradiction that $ Q $ is unbounded, i.e., there exists vector $ c \in \rec(Q) \setminus \{
\zerovec \} $. Since
\[
    P = \pi(Q) = \pi(Q + \rec(Q)) = \pi(Q) + \pi(\rec(Q)) = P + \pi(\rec(Q))
\]
and $ P $ is bounded, we see that the $ \rec(Q) $ lies in the kernel of $ \pi $.

Let $ \gamma := \max \{ \langle c,v \rangle : v \in \conv(V) \} $ and define $ Q' := \{ y \in Q : \langle c,y \rangle =
\gamma \} $.
We claim that $ (Q',\pi) $ is an extension for $ P $.
Clearly, it holds that $ \pi(Q') \subseteq P $.
For any $ x \in P $ let $ y = v + w \in Q $ such that $ \pi(y) = x $ with $ v \in \conv(V) $ and $ w \in \rec(Q) $.
Setting $ \lambda := \frac{\gamma - \langle c,v \rangle}{\|c\|^2} \geq 0 $ and $ y' := v + \lambda c \in Q $, we have
that
\[
    \langle c,y' \rangle = \langle c,v + \lambda c \rangle
    = \langle c,v \rangle + \langle c,\frac{\gamma - \langle c,v \rangle}{\|c\|^2} \cdotp c \rangle
    = \langle c,v \rangle + \gamma - \langle c,v \rangle
    = \gamma
\]
and hence $ y' \in Q' $. Moreover, we obtain
\[
    \pi(y') = \pi(v) + \lambda \pi(c) = \pi(v) = \pi(v) + \pi(w) = \pi(y) = x,
\]
which implies $ x \in \pi(Q') $ and thus $ \pi(Q') = P $ holds indeed.

Observing that $ \dim(Q') < \dim(Q) $ and that $ Q' $ has at most as many facets as $ Q $ yields the desired
contradiction. \qed

\end{document}